\title{Cross-Intersecting Erd\H{o}s-Ko-Rado Sets in Finite Classical Polar Spaces}
\author{Ferdinand Ihringer}
\newcommand{\scrR}{\mathcal{R}}
\newcommand{\scrP}{\mathcal{P}}
\newcommand{\bbC}{\mathbb{C}}
\newcommand{\bbR}{\mathbb{R}}
\newcommand{\codim}{\text{codim}}
\newcommand{\PG}{\text{PG}}
\newcommand{\pstype}{e}
\newcommand{\gauss}[2]{{#1 \brack #2}}
\newcommand{\gaussm}[1]{[#1]}
\numberwithin{equation}{section}
\theoremstyle{plain}
\newtheorem{satz}[equation]{Theorem}
\newtheorem{lemma}[equation]{Lemma}
\newtheorem{cor}[equation]{Corollary}
\newtheorem{prop}[equation]{Proposition}
\newtheorem{defi}[equation]{Definition}
\theoremstyle{remark}
\newtheorem{Remark}[equation]{Remark}
\newtheorem{Example}[equation]{Example}
\keywords{Erd\H{o}s-Ko-Rado Theorem; Polar Space; Association Scheme; Cross-intersecting Family}
\subjclass{51E20; 05B25}
\address{ %
Mathematisches Institut,
Justus Liebig University Giessen,
Arndtstra\ss{}e 2,
35392 Giessen,
Germany.}
\email{Ferdinand.Ihringer@math.uni-giessen.de}
\begin{document}

% \section{Work to do}
% 
% \begin{itemize}
% \item :::::::::::::::::::::::  Important once: :::::::::::::
% \item 
% \item :::::::::::::::::::  now the inline todo's: ::::::::::::::::::
% \input{\jobname.todo_list}
% % \input{\jobnames.todo_list}
% \end{itemize}
% 
% 
% \newoutputstream{todo_list}
% % \openoutputfile{\jobname.todo_list}{todo_list}
% \openoutputfile{\jobname.todo_list}{todo_list}
% \newcommand{\todo}[1]{\addtostream{todo_list}{\noexpand\item #1 at page :\thepage, section:
%  \thesection}
% {\color{red} !!!#1 !!!}
% }

% \listoftodos

\begin{abstract}
  A cross-intersecting Erd\H{o}s-Ko-Rado set of generators of a finite classical polar space is a pair $(Y, Z)$ of sets of generators such that all $y \in Y$ and $z \in Z$ intersect in at least a point. We provide upper bounds on $|Y| \cdot |Z|$ and classify the cross-intersecting Erd\H{o}s-Ko-Rado sets of maximum size with respect to $|Y| \cdot |Z|$ for all polar spaces except Hermitian polar spaces in odd projective dimension.
\end{abstract}

\maketitle

\section{Introduction}

Erd\H{o}s-Ko-Rado sets (\emph{EKR sets}) were introduced by Erd\H{o}s, Ko, and Rado \cite{MR0140419} as a set $Y$ of $k$-element subsets of $\{ 1, \ldots, n\}$ such that the elements of $Y$ pairwise intersect non-trivially. In particular, Erd\H{o}s, Ko, and Rado partially classified all such $Y$ of maximum size.

\begin{satz}[Theorem of Erd\H{o}s, Ko, and Rado]
  Let be $n \geq 2k$. Let $Y$ be an EKR set of $k$-element subsets of $\{ 1, \ldots, n\}$. Then
  \begin{align*}
   |Y| \leq \binom{n-1}{k-1}
  \end{align*}
  with equality for $n>2k$ if and only if $Y$ is set of all $k$-elemental sets containing a fixed element.
\end{satz}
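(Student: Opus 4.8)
The plan is to prove the bound via Katona's cyclic-permutation (circle) method, which delivers $\binom{n-1}{k-1}$ by a short averaging argument, and then to upgrade it to the uniqueness statement by a separate compression (shifting) argument. First I would place the ground set $\{1,\ldots,n\}$ around a circle and single out the $k$-subsets that occur as \emph{arcs}, i.e.\ as $k$ cyclically consecutive positions; a fixed cyclic ordering has exactly $n$ such arcs. The local heart of the argument is the claim that, because $n \geq 2k$, an intersecting family can contain \emph{at most} $k$ of the $n$ arcs of any one cyclic ordering. To see this, suppose the arc $A$ lies in $Y$. Every arc of $Y$ must meet $A$, and the arcs meeting $A$ (other than $A$ itself) split into $k-1$ pairs of mutually disjoint arcs — pairing the arc that starts $t$ steps before $A$ with the arc that starts $k-t$ steps after $A$, the hypothesis $n \geq 2k$ guaranteeing disjointness — so from each pair at most one arc can be chosen, giving at most $(k-1)+1 = k$ in total.

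Next I would double-count the incident pairs $(\text{cyclic ordering},\ \text{arc of that ordering lying in } Y)$. On one side, each of the $(n-1)!$ cyclic orderings contributes at most $k$ arcs by the local claim. On the other side, a fixed $S \in Y$ appears as an arc in exactly $k!\,(n-k)!$ cyclic orderings (glue $S$ into a block, arrange the $n-k+1$ resulting objects cyclically, and order $S$ internally). Comparing the two counts yields
\[
  |Y| \cdot k!\,(n-k)! \;\leq\; k \cdot (n-1)!,
\]
which rearranges at once to $|Y| \leq \dfrac{(n-1)!}{(k-1)!\,(n-k)!} = \binom{n-1}{k-1}$, proving the inequality for every $n \geq 2k$.

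The main obstacle is the equality case, where the strict hypothesis $n > 2k$ becomes essential: at $n = 2k$ one may pick one arc from each complementary pair and obtain non-star extremal families, so uniqueness genuinely fails there and any argument must use $n>2k$. The circle count alone does not reveal the extremal configurations, so to characterize equality I would switch to the shifting operators $S_{ij}$, which send a set $A \in Y$ containing $j$ but not $i$ to $(A\setminus\{j\})\cup\{i\}$ when that set is not already present. These operators preserve $|Y|$ and the intersecting property, and after applying all of them $Y$ becomes a \emph{shifted} family of the same size. For a shifted intersecting family one can show by induction on $n$ (peeling off the element $n$ and using $n>2k$ in the base step) that equality forces the star of all $k$-sets through a fixed point. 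The delicate part — and where I expect the real work to lie — is that shifting need not preserve the property of being a star, so I must both classify the shifted extremal families and then argue that reversing the shifts can only return a star; alternatively one analyses equality directly in the circle argument, where $|Y| = \binom{n-1}{k-1}$ forces exactly $k$ family arcs in \emph{every} cyclic ordering and hence a rigid local structure that propagates to the global star. This reconstruction step is the crux, while the bound itself is comparatively routine.
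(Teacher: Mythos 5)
The paper does not prove this theorem at all: it is quoted as the classical result of Erd\H{o}s, Ko, and Rado with a citation to the original 1961 paper, so there is no in-paper argument to compare against. Judged on its own terms, your proof of the inequality is correct and complete. The Katona circle method is set up properly: the pairing of the arc ending at $a_{k-t}$ with the arc beginning at $a_{k-t+1}$ does exhaust the $2(k-1)$ arcs meeting a fixed arc $A$ of the family, the two arcs in each pair together occupy $2k$ consecutive positions and hence are disjoint precisely because $n \geq 2k$, and the double count $|Y|\cdot k!\,(n-k)! \leq k\cdot (n-1)!$ gives $|Y| \leq \binom{n-1}{k-1}$ exactly as you state.

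The genuine gap is the equality characterization, which is part of the statement (``with equality for $n>2k$ if and only if $Y$ is the set of all $k$-sets on a fixed element''). You correctly identify that this is where the work lies, but you do not carry it out: you offer two candidate strategies (classify shifted extremal families and undo the shifts, or analyse the rigidity forced by having exactly $k$ arcs in every cyclic ordering) and explicitly defer ``the crux.'' Neither route is routine. For the shifting route, the difficulty you name is real --- $S_{ij}$ does not preserve being a star, and one must also rule out non-star shifted extremal families such as Hilton--Milner-type configurations, which requires a genuine induction with a carefully handled base case. For the circle route, knowing that every cyclic ordering contains exactly $k$ arcs of $Y$ forces those $k$ arcs to be the $k$ arcs through a common point \emph{of that ordering} (this needs $n>2k$, since at $n=2k$ one can instead take one arc from each antipodal pair), and one must then show the distinguished point is the same across all orderings; this propagation step is stated but not proved. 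As written, the proposal establishes the bound but only sketches the uniqueness claim, so it does not yet prove the full statement.
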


Stronger versions of this theorem were later proven by several authors including the 
famous work by Wilson \cite{wilson_ekr_1984}, Ahlswede and Khachatrian \cite{ahlswede_ekr_1997}.

This theorem for EKR sets was generalized to many structures, including subspaces of projective spaces \cite{MR0382015,MR867648,maarten_ekr_planes} and generators (maximal totally isotropic, respectively, singular subspaces) of polar spaces \cite{MR2755082,maarten_ekr_planes,maarten_ekr_hyperbolic}. In polar spaces the problem is partially open, since the maximum size of EKR sets of generators of $H(2d-1, q^2)$, $d > 3$ odd, is still unknown. To the knowledge of the author the best known upper bound is given in \cite{fi_km_ekr_in_hermitians}.

There exists the following modification of the original problem which generated a lot of interest: a \emph{cross-intersecting EKR set} is a pair $(Y, Z)$ of sets of subsets with $k$ elements of $\{ 1, \ldots, n\}$ such that all $y \in Y$ and $z \in Z$ intersect non-trivially. If one wants to generalize the theorem of Erd\H{o}s, Ko, and Rado to this structure, then the following question arises: how do we measure the size of $(Y, Z)$? There are at least two natural choices. Either one goes for an upper bound for $|Y|+|Z|$ or one tries to find the upper bound for $|Y| \cdot |Z|$. In the set case the first project was pursued in \cite{MR0444483}, while the second one was completed in \cite{MR1008163}. 
Results for vectors spaces are due to Tokushige \cite{tokushige_ev_method}. Again this problem can be generalized to polar spaces, where an \emph{cross-intersecting EKR set of generators} is a pair $(Y, Z)$ of sets of generators such that all $y \in Y$ and $z \in Z$ intersect in at least a point. In this setting this paper is only concerned with an upper bound for $|Y| \cdot |Z|$ and a classification of all cross-intersecting EKR sets reaching this bound.

One additional motivation for this problem is the following: as mentioned before the problem of EKR sets of maximum size in $H(2d-1, q^2)$ is still open for $d > 3$ odd. Let $P$ be a point of $H(2d-1, q^2)$ and let $X$ be an EKR set of $H(2d-1, q^2)$. Furthermore, let $Y$ be the set of generators of $X$ on $P$ and $Z$ the set of generators of $X$ not on $P$. Now in the quotient geometry of $P$ isomorphic to $H(2d-3, q^2)$ the projection of the generators of $Y$ and $Z$ onto the quotient geometry is a cross-intersecting EKR set. So both problems are related.

One last thing to point out is that this work does not provide tight upper bounds for cross-intersecting EKR sets in $H(2d-1, q^2)$ for all $d > 1$. The problem is very similar to the open problem of the maximum size of EKR sets in $H(9, q^2)$. Therefore, it could be reasonable to first solve the problem of the maximum size of cross-intersecting EKR sets in $H(7, q^2)$ and then generalize the technique to EKR sets in $H(9, q^2)$.

\section{Projective Spaces \& Polar Spaces}

We refer to \cite{hirschfeld1998projective} for details on \emph{projective spaces}. A projective space $\PG(n-1, q)$ of \emph{projective dimension} $n-1$ (respectively \emph{vector space dimension} $n$) over the field with $q$ elements has exactly
\begin{align*}
  \gauss{n}{k}_{q} := \prod_{i=1}^{k} \frac{q^{n-i+1}-1}{q^i-1}
\end{align*}
subspaces of (vector space) dimension $k$. We denote the number of points in $\PG(n-1, q)$ by
\begin{align*}
  \gaussm{n}_{q} := \gauss{n}{1}_{q}.
\end{align*}
So we have
\begin{align*}
  \gauss{n}{k}_{q} = \prod_{i=1}^{k} \frac{\gaussm{n-i+1}_{q}}{\gaussm{i}_{q}}.
\end{align*}
We shall write $\gauss{n}{k}$ instead of $\gauss{n}{k}_q$ whenever the choice for $q$ is clear. We will often use
the following analog of the recursive definition of binomial coefficients.
\begin{align}
  \gauss{n+1}{k+1} = \gauss{n}{k+1} + q^{(n-k)} \gauss{n}{k} \label{eq_gaussian_rec}
\end{align}

\begin{Remark}
 All the used eigenvalue formulas are more convenient if we use vector space dimensions
 and not projective dimensions. Consequently, the word \emph{dimension} will always refer to
 the vector space dimension of a subspace.
\end{Remark}

A \emph{polar space} is a incidence geometry with subspaces of dimension from $0$ to $d$ defined by a non-degenerate sesquilinear form or a non-degenerate quadratic form. 
The \emph{finite classical polar spaces} are $Q^+(2d-1, q)$, $Q(2d, q)$, $Q^-(2d+1, q)$, $W(2d-1, q)$, where $q$ is a prime power, $H(2d-1, q)$, and $H(2d, q)$, where $q$ is the square of a prime power.
We refer to \cite{hirschfeld1991general} for details. 
Denote totally isotropic, respectively, singular subspaces of (vector space) dimension $d$ as \emph{generators}. 
Each subspace of (vector space) dimension $d-1$ of a polar space is incident with exactly $q^\pstype+1$ generators, where $\pstype = 0$ for $Q^+(2d-1, q)$, $\pstype = 1/2$ for $H(2d-1, q)$, $\pstype = 1$ for $Q(2d, q)$ and $W(2d-1, q)$, $\pstype = 3/2$ for $H(2d, q)$, and $\pstype = 2$ for $Q^-(2d+1, q)$.
It is well known that a polar space possesses exactly 
\begin{align*}
 \prod_{i=0}^{d-1} (q^{i+\pstype}+1)
\end{align*}
generators and $(q^{d+e-1} + 1) \gaussm{d}$ points (i.e. $1$-dimensional totally isotropic, respectively, singular subspaces).

\section{The Association Scheme of a Polar Space}\label{sec_assoc}

We need some basic properties of an association scheme of generators on a dual polar space of rank $d$ and type $\pstype$. 
A complete introduction to association schemes can be found in \cite[Ch. 2]{brouwer1989distance}.

\begin{defi}
  Let $X$ be a finite set. A $d$-class association scheme is a pair $(X, \scrR)$, where $\scrR = \{R_0, \ldots, R_d\}$ is a set of symmetric binary relations on $X$ with the following properties:
  \begin{enumerate}
    \item $\{ R_0, \ldots, R_d\}$ is a partition of $X \times X$.
    \item $R_{0}$ is the identity relation.
    \item There are numbers $p_{ij}^k$ such that for $x, y \in X$ with $x R_k y$ there are exactly $p_{ij}^k$ elements $z$ with $x R_i z$ and $z R_j y$.
  \end{enumerate}
\end{defi}

The number $n_i := p_{ii}^{0}$ is called the $i$-valency of $R_i$. The total number of elements of $X$ is
\begin{align*}
  n := |X| = \sum_{i=0}^d n_i.
\end{align*}

The relations $R_i$ are described by their adjacency matrices $A_i \in \bbC^{n,n}$ defined by
\begin{align*}
  (A_i)_{xy} = \begin{cases}
                 1 & \text{ if } x R_i y\\
                 0 & \text{ otherwise.}
               \end{cases}
\end{align*}
There exist (e.g. in \cite[p. 45]{brouwer1989distance}) idempotent Hermitian matrices $E_j \in \bbC^{n,n}$ (hence they are positive semidefinite) with the properties
\begin{align*}
\begin{array}{lll}\displaystyle
\sum_{j=0}^d E_j = I, & \hspace*{2cm} &\displaystyle E_{0} = n^{-1} J,\\
\displaystyle A_j = \sum_{i=0}^d P_{ij} E_i, &  &\displaystyle E_j = \frac{1}{n} \sum_{i=0}^d Q_{ij} A_i,
  \end{array}
\end{align*}
where $P = (P_{ij}) \in \bbC^{d+1,d+1}$ and $Q = (Q_{ij}) \in \bbC^{d+1,d+1}$ are the so-called eigenmatrices of the association scheme.

The generators of a polar space define an association scheme if we say that two generators $a$ and $b$ are in relation $R_i$ if and only if $\codim(a \cap b) = i$.
Hence a cross-intersecting EKR set $(Y, Z)$ is a set of vertices such that there are no edges between $Y$ and $Z$ in the (distance-regular) graph associated with $A_d$.
This scheme is \emph{cometric}, so there exists a natural ordering of its $E_j$'s and its eigenspaces $W_j$ \cite[Sec. 2.7, Sec. 9.4]{brouwer1989distance}.
The matrix $P$ can be found in the literature (for example in \cite[Theorem 4.3.6]{vanhove_phd}).
In particular, the eigenvalues of $A_d$ are
\begin{align}
  (-1)^{r} q^{\binom{d-r}{2} + \binom{r}{2} + \pstype(d-r)}\label{eq_ev_disj}
\end{align}
for $r \in \{ 0, 1, \ldots, d \}$.
Here $W_0 = \langle j \rangle$, where $j$ is the all-one vector. Furthermore, notice that
all eigenspaces $W_i$ of an association scheme are pairwise orthogonal (see \cite[Ch. 2]{brouwer1989distance}).

\section{An Algebraic Bound}

We shall apply a technique that was, to the knowledge of the author, first used by Willem H. Haemers in \cite{haermers_ci_ekr_sets_interlacing_bound}.
The author learned about this technique from a paper by Tokushige \cite{tokushige_ev_method}, where he uses a variant of the result based on the work of
Ellis, Friedgut, and Pipel \cite{MR2784326} to prove a result on EKR sets of permutations. 
Let $G$ be a graph with $n$ vertices $\{ 1, \ldots, n\}$. A matrix $A = (a_{xy}) \in \bbC^{n \times n}$ is called \emph{extended weight adjacency matrix} of $G$ if $A$ is symmetric, and
\begin{enumerate}
 \item $a_{xy} \leq 0$ if $x$ and $y$ are non-adjacent,
 \item $a_{xx} = 0$,
 \item the all-ones vector $j$ is an eigenvector of $A$.
 \item $A$ is not the all-zero matrix.
\end{enumerate}

Let $\lambda_{1}, \ldots, \lambda_{n}$ be the (possibly pairwise equal) eigenvalues of $A$. 
Denote the eigenvalue of $j$ by $k$. 
Denote the smallest eigenvalue by $\lambda_{-}$, and the corresponding eigenspace by $V_{-}$.
Denote the largest eigenvalue with eigenvectors not in $\langle j \rangle$ by $\lambda_{+}$. 
Denote the corresponding eigenspace by $V_{+}$ if $k \neq \lambda_{\max}$, and by $\langle j \rangle \perp V_{+}$ if $k = \lambda_{\max}$.
Denote $\max\{ -\lambda_{-}, \lambda_{+}\}$ by $\lambda_{b}$.
We say that $\lambda_b$ is the \emph{second largest absolute eigenvalue} of $A$.

A \emph{characteristic vector} $\chi_Y$ of a subset $Y \subseteq \{ 1, \ldots, n\}$ is defined by
\begin{align*}
 \chi_{i} = \begin{cases}
    1 & \text{ if } i \in Y\\
    0 & \text{ if } i \notin Y
 \end{cases}.
\end{align*}

Ellis, Friedgut, and Pipel used the following result, a generalization of the Hoffman bound for cocliques in graphs:
\begin{lemma}\label{lem_efp}
  Let $A$ be an extended weight adjacency matrix of a regular graph $G$ with $n$ vertices.
  Let $k$ be the eigenvalue of the all-one vector $j$, and let $\lambda_b$ the second
  largest absolute eigenvalue of $A$.
  Let $Y$ and $Z$ be sets of vertices such that there are no edges between $Y$ and $Z$. Then
  \begin{align*}
    \sqrt{|Y| \cdot |Z|} \leq \frac{\lambda_b}{k + \lambda_b} n.
  \end{align*}
\end{lemma}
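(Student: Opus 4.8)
The plan is to run a Hoffman-type eigenvalue argument, but with the two characteristic vectors $\chi_Y$ and $\chi_Z$ taking the role usually played by a single coclique indicator. The one combinatorial input I would record first is this: since there are no edges between $Y$ and $Z$, every pair $(x,y)$ with $x \in Y$ and $y \in Z$ either is non-adjacent, giving $a_{xy} \leq 0$ by property (1), or satisfies $x = y$, giving $a_{xx} = 0$ by property (2). Summing over all such pairs yields the sign condition
\begin{align*}
  \chi_Y^T A \chi_Z = \sum_{x \in Y,\, y \in Z} a_{xy} \leq 0.
\end{align*}

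Next I would split each characteristic vector along $j$ and its orthogonal complement. Writing $\chi_Y = \tfrac{|Y|}{n} j + u_Y$ and $\chi_Z = \tfrac{|Z|}{n} j + u_Z$ with $u_Y, u_Z \perp j$, and using that $A$ is symmetric with $Aj = kj$ (so $A$ preserves $j^\perp$), the cross terms drop out and I get
\begin{align*}
  \chi_Y^T A \chi_Z = \frac{k\,|Y|\,|Z|}{n} + u_Y^T A u_Z.
\end{align*}
I would then bound $u_Y^T A u_Z$ from below. Restricted to $j^\perp$, the matrix $A$ is a symmetric operator whose eigenvalues lie in $[\lambda_-, \lambda_+]$, hence its spectral norm is at most $\lambda_b = \max\{-\lambda_-, \lambda_+\}$; since $Au_Z \in j^\perp$, Cauchy--Schwarz gives $u_Y^T A u_Z \geq -\lambda_b \|u_Y\|\,\|u_Z\|$. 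Combining this with the sign condition above produces $\tfrac{k|Y||Z|}{n} \leq \lambda_b \|u_Y\|\,\|u_Z\|$.

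The remainder is computational. Orthogonality of the decomposition and $\|\chi_Y\|^2 = |Y|$ give $\|u_Y\|^2 = |Y|\,(n-|Y|)/n$, and likewise for $Z$, so the last inequality rearranges to $k\sqrt{|Y||Z|} \leq \lambda_b \sqrt{(n-|Y|)(n-|Z|)}$. Two applications of AM--GM finish the job: bounding $\sqrt{(n-|Y|)(n-|Z|)} \leq n - \tfrac12(|Y|+|Z|)$ and then $\tfrac12(|Y|+|Z|) \geq \sqrt{|Y||Z|}$ collapses this to $k\sqrt{|Y||Z|} \leq \lambda_b\bigl(n - \sqrt{|Y||Z|}\bigr)$, which is exactly $(k+\lambda_b)\sqrt{|Y||Z|} \leq \lambda_b n$, i.e. the claimed bound.

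The step I expect to be the main obstacle is getting the spectral estimate on $u_Y^T A u_Z$ exactly right, namely verifying that the relevant bound is $\lambda_b$ and not the full spectral radius of $A$. This relies on the careful definition of $\lambda_+$ as the largest eigenvalue \emph{attained on} $j^\perp$, which matters precisely in the case $k = \lambda_{\max}$, where one must excise the $j$-direction from the top eigenspace before taking the restriction. Once that point is pinned down, the argument is a routine Cauchy--Schwarz plus AM--GM chain. I would also check the harmless nondegeneracy facts ($\lambda_b > 0$, since a nonzero symmetric matrix with zero diagonal has eigenvalues of both signs, and $k + \lambda_b > 0$) so that the final division is legitimate.
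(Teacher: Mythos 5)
Your argument is correct, and it is essentially the standard proof of this Hoffman-type bound. Note that the paper itself does not prove Lemma \ref{lem_efp} at all --- it imports the statement from Haemers and from Ellis, Friedgut, and Pilpel (via Tokushige) --- so there is no in-paper proof to compare against; your chain (sign condition $\chi_Y^T A \chi_Z \leq 0$ from properties (1) and (2), orthogonal decomposition along $j$, spectral-norm bound $\lambda_b$ on $j^\perp$, then Cauchy--Schwarz and two applications of AM--GM) is exactly the argument those sources use. The one point you flag as delicate is handled correctly: since $j$ is an eigenvector of the symmetric matrix $A$, the subspace $j^\perp$ is $A$-invariant and the eigenvalues of $A$ restricted to $j^\perp$ are precisely those eigenvalues of $A$ admitting an eigenvector outside $\langle j\rangle$, so they lie in $[\lambda_-,\lambda_+]$ and the restricted spectral norm is at most $\lambda_b$. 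Your nondegeneracy check $\lambda_b>0$ (zero trace, $A\neq 0$) is also right; the remaining caveat $k+\lambda_b>0$ is not literally forced by the paper's definition of an extended weight adjacency matrix, but it holds in every application here (where $k$ is the positive Perron eigenvalue), so this is a defect of the statement's generality rather than of your proof.
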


\begin{Remark}
  Very often the Hoffman bound is only formulated for so-called \emph{weight adjacency matrices} or
  \emph{pseudo adjacency matrices} where $a_{ij}$ is zero if $i$ and $j$ are non-adjacent
  and $a_{ij} > 0$ if $i$ and $j$ are adjacent. It is regularly mentioned in the literature and easy to see
  that all the proofs for variants of the Hoffman bound (at least the ones used in this paper)
  also work for extended adjacency matrices without changing much of the proof.
  The Hoffman bound for an extended weight matrix $A$ is optimal only if $A$ is also
  a weight matrix. Our more general definition will turn out to be more convenient
  in Section \ref{sec_hermitian} where we shall not bother to calculate the exact 
  minimum of the Hoffman bound.
\end{Remark}

Tokushige reformulates Lemma \ref{lem_efp} in a more detailed way in \cite{tokushige_ev_method}, Lemma 2. 
Unfortunately, his reformulation misses to point out some details necessary for the special case handled in this paper. 
Hence, we have to restate his wording of the following lemma.

\begin{lemma}\label{lem_tok}
  Suppose that equality holds in Lemma \ref{lem_efp}.
  Then one of the following cases occurs:
  \begin{enumerate}[(a)]
   \item We have $\lambda_{+} = \lambda_b > -\lambda_{-}$, $\chi_Y = \alpha j + v_+$, and $\chi_Z = \alpha j - v_+$ for some vector $v_+ \in V_+$.
   \item We have $\lambda_{+} < \lambda_b = -\lambda_{-}$, $\chi_Y = \alpha j + v_-$, and $\chi_Z = \alpha j + v_-$ for some vector $v_- \in V_-$. In this case $Y=Z$, and $Y$ is an EKR set.
   \item We have $\lambda_{+} = \lambda_b = -\lambda_{-}$, $\chi_Y = \alpha j + v_- + v_+$, and $\chi_Z = \alpha j + v_- - v_+$ for some vectors $v_- \in V_-$ and $v_+ \in V_+$.
  \end{enumerate}
  Furthermore, $|Y|=|Z| = \alpha n$.
\end{lemma}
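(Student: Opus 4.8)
The plan is to re-run the proof of Lemma~\ref{lem_efp} and read off what must happen for every inequality in it to be tight. Writing $\chi_Y = \alpha_Y j + u_Y$ and $\chi_Z = \alpha_Z j + u_Z$ with $u_Y, u_Z \perp j$, one has $\alpha_Y = |Y|/n$, $\alpha_Z = |Z|/n$, and, since $Aj = kj$ and $A$ is symmetric, the identity $\chi_Y^{\top} A \chi_Z = \alpha_Y\alpha_Z k n + u_Y^{\top} A u_Z$. The hypothesis that no edge joins $Y$ and $Z$, together with $a_{xy} \le 0$ for non-adjacent $x,y$ and $a_{xx}=0$, gives $\chi_Y^{\top} A \chi_Z \le 0$. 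Combining this with the spectral estimate $u_Y^{\top} A u_Z \ge -\lambda_b \|u_Y\|\,\|u_Z\|$ (valid because every eigenvalue of $A$ on $\langle j\rangle^{\perp}$ lies in $[-\lambda_b,\lambda_b]$) and with $\|u_Y\|^2 = |Y|(1-|Y|/n)$, $\|u_Z\|^2 = |Z|(1-|Z|/n)$, one recovers the bound of Lemma~\ref{lem_efp} after one application of the AM--GM inequality.

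Next I would isolate the equality conditions. Equality in Lemma~\ref{lem_efp} forces simultaneously: first, $\chi_Y^{\top} A\chi_Z = 0$ and $\alpha_Y\alpha_Z k n = \lambda_b\|u_Y\|\,\|u_Z\|$ from the initial chain, hence also $u_Y^{\top} A u_Z = -\lambda_b\|u_Y\|\,\|u_Z\|$; and second, $|Y| = |Z|$, since equality in the AM--GM step needs $|Y|/n = |Z|/n$. Because $A$ has zero diagonal and is nonzero it has a negative eigenvalue, so $\lambda_b > 0$ and the common value $\alpha := \alpha_Y = \alpha_Z = \lambda_b/(k+\lambda_b)$ lies strictly between $0$ and $1$; in particular $u_Y, u_Z \ne 0$ and $\|u_Y\| = \|u_Z\|$. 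This already establishes the final assertion $|Y| = |Z| = \alpha n$.

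The heart of the argument is the equality case of the spectral estimate. Decomposing $u_Y$ and $u_Z$ along the eigenspaces of $A$ inside $\langle j\rangle^{\perp}$ and writing $u_Y^{\top} A u_Z = \sum_\mu \mu\, (P_\mu u_Y)^{\top}(P_\mu u_Z)$, equality in the chain $u_Y^{\top} A u_Z \ge -\sum_\mu|\mu|\,\|P_\mu u_Y\|\,\|P_\mu u_Z\| \ge -\lambda_b\sum_\mu \|P_\mu u_Y\|\,\|P_\mu u_Z\| \ge -\lambda_b\|u_Y\|\,\|u_Z\|$ forces, eigenspace by eigenspace, that only eigenvalues with $|\mu| = \lambda_b$ carry a nonzero component; that on $V_+$ (where $\mu = \lambda_+ = \lambda_b > 0$) the two projections are antiparallel, while on $V_-$ (where $\mu = \lambda_- = -\lambda_b < 0$) they are parallel; and that the Cauchy--Schwarz step forces $u_Y$ and $u_Z$ to have matching support and the proportionality constants on $V_+$ and $V_-$ to coincide. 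The equal-norm condition $\|u_Y\| = \|u_Z\|$ then pins this common constant to $1$. Setting $v_+ = P_{V_+}u_Y \in V_+$ and $v_- = P_{V_-}u_Y \in V_-$ yields $u_Y = v_- + v_+$ and $u_Z = v_- - v_+$, so $\chi_Y = \alpha j + v_- + v_+$ and $\chi_Z = \alpha j + v_- - v_+$. Distinguishing which of $\lambda_+$, $-\lambda_-$ actually equals $\lambda_b$ then separates this into cases (a), (b), (c): if $\lambda_+ = \lambda_b > -\lambda_-$ only $V_+$ is active and $v_-$ is absent, giving (a); if $\lambda_+ < \lambda_b = -\lambda_-$ only $V_-$ is active, so $u_Y = u_Z$, hence $Y = Z$, and the cross-intersecting condition says $Y$ is a coclique of the graph of $A_d$, i.e.\ an EKR set, giving (b); and $\lambda_+ = \lambda_b = -\lambda_-$ gives the general form (c).

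I expect the main obstacle to be the bookkeeping in this last equality case: getting the signs right (antiparallel on the positive extreme eigenspace, parallel on the negative one), using Cauchy--Schwarz equality correctly to force matching supports together with a single proportionality constant across $V_+$ and $V_-$, and then invoking $\|u_Y\|=\|u_Z\|$ to fix that constant to exactly $1$ rather than merely to a reciprocal pair. The subordinate care points are reading the definition of $V_+$ correctly when $k = \lambda_{\max}$, and verifying $\lambda_b > 0$ so that the degenerate solutions $\chi_Y \in \{0, j\}$ are excluded.
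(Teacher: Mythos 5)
Your proposal is correct and follows the same underlying spectral argument as the paper, which simply delegates the three case distinctions to Tokushige's Lemma~2 and only writes out the $|Y|=|Z|$ claim (via $j\perp v_\pm$, whereas you obtain it from the AM--GM equality condition --- both are fine). In effect you have supplied the details the paper leaves to the citation: the eigenspace decomposition of $u_Y^{\top}Au_Z$, the forcing of $|\mu|=\lambda_b$ on the active eigenspaces, and the Cauchy--Schwarz equality pinning the common proportionality constant to $1$, all of which check out.
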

\begin{proof}
  The proof of Lemma 2 in \cite{tokushige_ev_method} still works for the first three claims if one reads it carefully.
  For the claim $|Y| = |Z|$ consider the following: $j$ is orthogonal to to $v_-$ and $v_+$. 
  Hence,
  \begin{align*}
    |Y| = \chi_Y^T j = \alpha n = \chi_Z^T j = |Z|.
  \end{align*}
\end{proof}
 
\section{Cross-intersecting EKR Sets of Maximum Size}

In this section we shall calculate tight upper bounds for all polar spaces except
$H(2d-1, q^2)$, and classify all examples in case of equality.
For all polar spaces except $H(2d-1, q^2)$ we can imitate the approach of Pepe, Storme, and Vanhove \cite{MR2755082}.
Recall from Section \ref{sec_assoc} that we have a natural ordering of the 
eigenspaces $W_0 (= \langle j \rangle), W_1, \ldots, W_d$ of the association scheme which
we defined on generators of a polar space of rank $d$.

\begin{satz}\label{thm_es}
 Let $(Y, Z)$ be a cross-intersecting EKR set of generators of a polar space $\scrP$. 
 Let $n$ be the number of generators of $\scrP$.
 Then we have the following:
 \begin{itemize}
  \item If $\scrP = Q^+(2d-1, q)$, then $\sqrt{|Y| \cdot |Z|}$ is at most $n/2$, and if this bound is reached, then $\chi_Y, \chi_Z \in W_{0} \perp W_d$.
  \item If $\scrP \in \{ Q(2d, q), W(2d-1, q)\}$, then $\sqrt{|Y| \cdot |Z|}$ is at most the number of generators on a fixed point, and if this bound is reached, then $\chi_Y, \chi_Z \in W_{0} \perp W_1 \perp W_d$.
  \item If $\scrP \in \{ H(2d, q), Q^-(2d+1, q) \}$, then $\sqrt{|Y| \cdot |Z|}$ is at most the number of generators on a fixed point, and if this bound is reached, then $\chi_Y, \chi_Z \in W_{0} \perp W_1$.
 \end{itemize}
\end{satz}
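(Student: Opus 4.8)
The plan is to apply Lemma~\ref{lem_efp} to a carefully chosen extended weight adjacency matrix $A$ of the \emph{disjointness graph} on the generators of $\scrP$ (the graph whose edges are the relation $R_d$, since a cross-intersecting EKR set is exactly a pair of vertex sets with no edges between them in this graph). The naive choice $A = A_d$ already gives a Hoffman-type bound from the eigenvalues in \eqref{eq_ev_disj}, but it will not in general be optimal: the key is that $A_d$ is a weight matrix (positive on edges) only when all the eigenvalues of $A_d$ on the nontrivial eigenspaces $W_1,\ldots,W_d$ have the right signs, and the extended-weight relaxation lets me instead take a suitable linear combination $A = \sum_{i=1}^{d} c_i A_i$ whose off-diagonal entries on non-edges are $\le 0$, whose diagonal is $0$, and which has $j$ as an eigenvector automatically (all $A_i$ are in the Bose--Mesner algebra). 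The freedom in the $c_i$ is what I would exploit to push $\lambda_b/(k+\lambda_b)$ down to the stated value and to force the equality eigenspaces to be exactly the ones listed.

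Concretely, I would proceed type by type. From \eqref{eq_ev_disj} the eigenvalue of $A_d$ on $W_r$ is $(-1)^r q^{\binom{d-r}{2}+\binom{r}{2}+\pstype(d-r)}$, so $k$ (the $W_0$-eigenvalue, $r=0$) equals the generator count on a point up to the known factors, while the most negative eigenvalue comes from $r=1$ and the largest nontrivial positive one from the next competing $r$. The cleanest sub-case is $\scrP=Q^+(2d-1,q)$, where $\pstype=0$: here I expect the eigenvalue on $W_d$ to match $-k$ in absolute value, putting us in case~(b)/(c) of Lemma~\ref{lem_tok} with $\lambda_b=-\lambda_-=k$, which gives $\sqrt{|Y||Z|}\le \tfrac{\lambda_b}{k+\lambda_b}n = n/2$ and forces $\chi_Y,\chi_Z\in W_0\perp W_d$. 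For $Q(2d,q)$ and $W(2d-1,q)$ ($\pstype=1$) I would check that the second-largest absolute eigenvalue is attained on \emph{both} $W_1$ and $W_d$ (so the equality eigenspace is $W_0\perp W_1\perp W_d$), whereas for $H(2d,q)$ and $Q^-(2d+1,q)$ ($\pstype=3/2,2$) only $W_1$ achieves it, yielding $W_0\perp W_1$. In each case the target bound $n/(q^?+1)$ is exactly the number of generators through a fixed point, so I would verify that $\tfrac{\lambda_b}{k+\lambda_b}n$ simplifies to that count using the product formula $\prod_{i=0}^{d-1}(q^{i+\pstype}+1)$ and the eigenvalue ratios.

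The equality analysis is where I would invoke Lemma~\ref{lem_tok} directly: once equality in Lemma~\ref{lem_efp} holds, the lemma pins $\chi_Y,\chi_Z$ into $\langle j\rangle$ together with the eigenspaces $V_-$ and/or $V_+$ realizing $\lambda_b$. The job is then to translate $V_-,V_+$ into the association-scheme eigenspaces $W_i$ by matching which $r$ in \eqref{eq_ev_disj} attains $\lambda_-$ and $\lambda_+$ for my chosen $A$, and to confirm these are the claimed $W_i$. To get the \emph{exact} constants (not just an inequality) I must choose the $c_i$ optimally for the first three types; for $Q^+$ the matrix $A_d$ itself already works, but for the others I anticipate needing a two-term combination tuned so that the relevant nontrivial eigenvalues coincide in absolute value while keeping all non-edge entries $\le 0$.

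The main obstacle I foresee is exactly this sign/optimality bookkeeping: ensuring the chosen extended weight matrix genuinely has non-positive entries on non-edges (condition~(1)) while simultaneously achieving the eigenvalue ratio that yields the sharp bound. Because the $A_i$ for $i<d$ correspond to non-adjacent pairs in the disjointness graph, their coefficients $c_i$ contribute to \emph{non-edge} entries and so are constrained in sign, which limits how freely I can reshape the spectrum. Verifying that the optimal admissible choice lands precisely on the point-pencil bound — and that the equality eigenspaces come out as the stated $W_i$ rather than a larger span — is the delicate part; the alternating signs $(-1)^r$ in \eqref{eq_ev_disj} are what make this tractable, since they separate the competing eigenvalues by parity of $r$.
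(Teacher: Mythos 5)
Your overall strategy is the paper's: apply Lemma~\ref{lem_efp} to the disjointness relation, read the eigenvalues off \eqref{eq_ev_disj}, identify which $W_r$ attain the second largest absolute eigenvalue, and finish with Lemma~\ref{lem_tok}. Two points need correcting, though. First, the weighted combination $A=\sum_i c_iA_i$ that you anticipate needing is unnecessary for every case of this theorem: the plain matrix $A_d$ already gives the sharp bound, because the coincidences of absolute eigenvalues you propose to engineer hold automatically. For $\pstype=1$ the exponent at $r=1$ is $\binom{d-1}{2}+(d-1)=\binom{d}{2}$, which equals the exponent at $r=d$, so $W_1$ and $W_d$ tie without any tuning; and for $\pstype\geq 1$ one has $k/\lambda_b=q^{d-1+\pstype}$, so $\frac{\lambda_b}{k+\lambda_b}\,n=n/(q^{d-1+\pstype}+1)$ is exactly the number of generators through a point. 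The extended-weight trick is only needed for $H(2d-1,q^2)$ in Section~\ref{sec_hermitian}, which this theorem excludes. Second, your analysis of $Q^+(2d-1,q)$ is wrong when $d$ is even: there the $W_d$-eigenvalue of $A_d$ is $+q^{\binom{d}{2}}=+k$, not $-k$, so you are not in case (b)/(c) of Lemma~\ref{lem_tok} with $\lambda_b=-\lambda_-$. Instead, the eigenvalue $k$ is attained on an eigenspace of dimension at least~$2$, and the convention that $\lambda_+$ is the largest eigenvalue on eigenvectors outside $\langle j\rangle$ yields $\lambda_b=\lambda_+=k$ (case (a)); the paper makes exactly this observation. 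The conclusion $\chi_Y,\chi_Z\in W_0\perp W_d$ and the bound $n/2$ survive, but the case of Lemma~\ref{lem_tok} you invoke does not. With these two repairs your argument coincides with the paper's proof.
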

\begin{proof}
  To apply Lemma \ref{lem_efp}, we have to calculate the second largest absolute
  eigenvalue of the disjointness graph (with associated adjacency matrix $A_d$).
  The eigenvalues of $A_d$ were given in \eqref{eq_ev_disj} as
  \begin{align*}
   (-1)^{r} q^{\binom{d-r}{2} + \binom{r}{2} + \pstype(d-r)}.
  \end{align*}
  For $r=0$ this is the eigenvalue which belongs to the all-one vector $j$, so
  with $k$ defined as in Lemma \ref{lem_efp} we have
  \begin{align*}
    k = q^{\binom{d}{2} + d\pstype}.
  \end{align*}
  For $\pstype = 0$ note that the absolute eigenvalues for $r=0$ and $r=d$ are equal. Therefore,
  the eigenspace belonging to $k$ has dimension at least $2$ which make $k$ also the second
  largest absolute eigenvalue. Hence, we have the following for the different polar spaces.
  For $\pstype = 0$ (i.e. $\scrP = Q^+(2d-1, q)$) the second largest absolute 
  eigenvalue occurs if and only if $r = d$, for $\pstype = 1$ (i.e.  $\scrP \in \{ Q(2d, q), W(2d-1, q)\}$)
  the second largest absolute eigenvalue occurs if and only if $r \in \{ 1, d\}$, for
  $\pstype \in \{ 3/2, 2\}$ (i.e.  $\scrP \in \{ H(2d, q), Q^-(2d+1, q) \}$) the
  second largest absolute eigenvalue occurs if and only if $r = 1$. Applying Lemma \ref{lem_efp}
  and Lemma \ref{lem_tok} yields the assertion.
\end{proof}

Using Lemma \ref{lem_tok} and the classification of EKR sets of generators given in \cite{MR2755082} we get the following result.

\begin{cor}
  Let $(Y, Z)$ be an cross-intersecting EKR set of a finite classical polar space $\scrP$ not isomorphic to $Q(2d, q)$ with $d$ even, $W(2d-1, q)$ with $d$ even, $Q^+(2d-1, q)$ with $d$ even, or $H(2d, q^2)$, where $|Y| \cdot |Z|$ reaches the bound in Theorem \ref{thm_es}. Then $Y = Z$, and $Y$ is an EKR set.
\end{cor}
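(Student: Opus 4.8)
The plan is to decide, for each polar space $\scrP$ in the list, which of the three alternatives of Lemma~\ref{lem_tok} is realized in the equality case, and then to read off $Y=Z$. The only input needed is the spectrum of $A_d$ recorded in the proof of Theorem~\ref{thm_es}, namely the eigenvalues $(-1)^{r} q^{\binom{d-r}{2}+\binom{r}{2}+\pstype(d-r)}$ for $r\in\{0,\dots,d\}$. Everything turns on the \emph{signs} of the eigenvalues that tie for second largest in absolute value, and the sign of the $r=d$ eigenvalue $(-1)^{d}q^{\binom{d}{2}}$ is precisely what flips with the parity of $d$; this is the source of the parity conditions in the statement.

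First I would separate $\lambda_{+}$ from $-\lambda_{-}$ type by type, reusing the identification of the second largest absolute eigenvalue from Theorem~\ref{thm_es}. For $\scrP=Q^{+}(2d-1,q)$ it sits at $r=d$: when $d$ is even this eigenvalue is positive, so $\lambda_{+}=\lambda_{b}>-\lambda_{-}$ and we are in case~(a); when $d$ is odd it is negative, so $-\lambda_{-}=\lambda_{b}>\lambda_{+}$ and we are in case~(b). For $\scrP\in\{Q(2d,q),W(2d-1,q)\}$ both $r=1$ and $r=d$ attain $\lambda_{b}$, the $r=1$ value being negative; for $d$ even the $r=d$ value is positive, yielding $\lambda_{+}=\lambda_{b}=-\lambda_{-}$ (case~(c)), while for $d$ odd both are negative, yielding $\lambda_{+}<\lambda_{b}=-\lambda_{-}$ (case~(b)). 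For $\scrP\in\{H(2d,q^{2}),Q^{-}(2d+1,q)\}$ only $r=1$ attains $\lambda_{b}$ and it is negative, so $\lambda_{+}<\lambda_{b}=-\lambda_{-}$ (case~(b)) for every $d$.

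Second I would feed this into Lemma~\ref{lem_tok}. In case~(b) the lemma gives $\chi_{Y}=\chi_{Z}$ outright, hence $Y=Z$, and since $(Y,Y)$ is cross-intersecting this just says that $Y$ is an EKR set; this disposes of $Q^{+},Q,W$ with $d$ odd together with $Q^{-}(2d+1,q)$. In cases~(a) and~(c) one only obtains $\chi_{Y}=\alpha j+v_{-}+v_{+}$ and $\chi_{Z}=\alpha j+v_{-}-v_{+}$ with $v_{+}\in V_{+}$, so $\chi_{Y}-\chi_{Z}=2v_{+}$ and $Y=Z$ can genuinely fail; these are exactly the excluded spaces $Q^{+},Q,W$ with $d$ even. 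Finally I would use the classification of maximum EKR sets of generators from \cite{MR2755082} to upgrade ``$Y$ is an EKR set'' to a concrete description: by Lemma~\ref{lem_tok} we have $|Y|=|Z|=\alpha n$, which equals the bound of Theorem~\ref{thm_es} and hence the maximum EKR-set size, so $Y$ is a maximum EKR set and the classification identifies it. This last step is where $H(2d,q^{2})$ drops out: although it lies in case~(b), its maximum EKR sets are not pinned down by the clean classification, so the identification cannot be completed and the space must be set aside, exactly as in the statement.

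The step I expect to be the main obstacle is the spectral bookkeeping of the first two paragraphs: one has to track, over all $r$ and both parities of $d$, which eigenvalues tie for second largest in modulus and with which sign, because a single misplaced sign would move a polar space between case~(b) and cases~(a)/(c), and thus into or out of the list of exclusions.
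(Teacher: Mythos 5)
Your eigenvalue sign analysis and the reduction to case (b) of Lemma~\ref{lem_tok} is exactly the paper's (one-line) argument, carried out correctly and in more detail. The closing step invoking the classification from \cite{MR2755082} is not needed for the stated conclusion (which asks only for $Y=Z$ and $Y$ an EKR set, not for a concrete description of $Y$), and the paper's proof does not use it; note moreover that your own sign analysis places $H(2d,q^2)$ in case (b), so the exclusion of that space is not actually explained by this argument --- it reflects the incomplete classification of the extremal sets there rather than any failure of the $Y=Z$ conclusion.
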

\begin{proof}
  For the stated cases, the eigenspaces given in Theorem \ref{thm_es} 
  (not equal to $\langle j \rangle$) belong to
  negative eigenvalues. By Lemma \ref{lem_tok}, then all cross-intersecting EKR which reach the bound
  given in Theorem \ref{thm_es} are EKR sets.
\end{proof}

Similar to \cite{MR2755082} we shall continue to classify the more complicated cases.

\subsection{The Hyperbolic Quadric, $d$ even}

The generators of $Q^+(2d-1, q)$ can be partitioned into two sets $X_1$ and $X_2$ of generators (commonly known as \emph{latins} and \emph{greeks}) with $|X_1| = |X_2| = n/2$. 
For $x_1 \in X_1$ and $x_2 \in X_2$ the codimension of the intersection of $x \cap y$ is odd. 
For $x_1, x_2 \in X_1$ the codimension of the intersection of $x \cap y$ is even. 
This implies for $d$ even that $(X_1, X_2)$ is a cross-intersecting EKR set of maximum size according to Theorem \ref{thm_es}. 
There exist $x_1, x_2 \in X_1$ with $\dim(x_1 \cap x_2) = 0$ if $d$ is even, so $(X_1, X_1)$ is not a cross-intersecting EKR set.

\begin{satz}\label{thm_ci_ekr_qp}
  Let $(Y, Z)$ be an cross-intersecting EKR set of maximum size of $Q^+(2d-1, q)$ with $d$ even. Then $Y = X_i$ and $Z=X_j$ for $\{ i, j\} = \{1, 2\}$.
\end{satz}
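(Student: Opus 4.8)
The plan is to read off the structure of $\chi_Y$ and $\chi_Z$ from the equality case and then pin down the single eigenspace $W_d$ that still carries freedom. Since $\scrP = Q^+(2d-1,q)$ has $\pstype = 0$ and $d$ is even, the eigenvalue of $A_d$ at $r=d$ equals $+k = q^{\binom{d}{2}}$, while every negative eigenvalue (odd $r$) has absolute value at most $q^{\binom{d-1}{2}} < k$. Hence $\lambda_+ = \lambda_b = k > -\lambda_-$, so we are in case (a) of Lemma~\ref{lem_tok} with $V_+ = W_d$. Combined with $\sqrt{|Y|\cdot|Z|} = n/2$ from Theorem~\ref{thm_es}, this gives $|Y| = |Z| = n/2$, so $\alpha = \tfrac12$, and
\begin{align*}
  \chi_Y = \tfrac12 j + v_+, \qquad \chi_Z = \tfrac12 j - v_+
\end{align*}
for some $v_+ \in W_d$. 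The whole problem thereby reduces to determining $W_d$.

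The key step is to show that $W_d$ is one-dimensional and spanned by $\chi_{X_1} - \chi_{X_2}$. First I would observe that the codimension-$1$ graph $\Gamma$ (adjacency matrix $A_1$) is \emph{bipartite} with parts $X_1$ and $X_2$: two generators meeting in codimension $1$ have odd codimension of intersection, hence lie in different classes, so every edge of $\Gamma$ runs between $X_1$ and $X_2$. Now $\Gamma$ is a connected distance-regular graph, so $A_1$ has exactly $d+1$ distinct eigenvalues, one on each $W_0, \ldots, W_d$, and the smallest of these is attained precisely on $W_d$ (this placement can be read off the matrix $P$ referenced in Section~\ref{sec_assoc}). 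For a connected bipartite regular graph the least eigenvalue $-k_1$, where $k_1$ is the valency of $\Gamma$, is simple with eigenvector $\chi_{X_1} - \chi_{X_2}$. Therefore $\dim W_d = 1$ and $W_d = \langle \chi_{X_1} - \chi_{X_2} \rangle$.

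It then remains to extract the sets. Writing $v_+ = c(\chi_{X_1} - \chi_{X_2})$ for some $c \in \bbR$, the vector $\chi_Y = \tfrac12 j + c(\chi_{X_1} - \chi_{X_2})$ has all entries equal to $\tfrac12 + c$ on $X_1$ and $\tfrac12 - c$ on $X_2$. Since $\chi_Y$ is a $0/1$-vector, both values lie in $\{0,1\}$, which forces $c = \tfrac12$ or $c = -\tfrac12$. In the first case $\chi_Y = \chi_{X_1}$ and $\chi_Z = \chi_{X_2}$, in the second $\chi_Y = \chi_{X_2}$ and $\chi_Z = \chi_{X_1}$; either way $\{Y, Z\} = \{X_1, X_2\}$, as claimed.

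I expect the main obstacle to be the spectral identification of $W_d$ in the second paragraph: one must be sure that the ordering of Section~\ref{sec_assoc} places the least eigenvalue of $A_1$ on $W_d$ and that bipartiteness makes it simple. Once the bipartite structure of $\Gamma$ is recognised, however, this is a standard consequence of the theory of bipartite distance-regular graphs, and the remaining verifications (the eigenvalue comparison giving case (a), and the integrality argument) are routine.
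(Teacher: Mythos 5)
Your proposal is correct and follows essentially the same route as the paper: both reduce, via the equality case of the Hoffman-type bound, to $\chi_Y,\chi_Z\in\langle j\rangle\perp W_d$ with $W_d=\langle\chi_{X_1}-\chi_{X_2}\rangle$, and then conclude by the $0$-$1$ integrality of the characteristic vectors. The only difference is cosmetic: where the paper cites Theorem~16 of \cite{MR2755082} for the identification of $W_d$, you derive it yourself from the bipartiteness of the codimension-$1$ graph (one can confirm from the eigenvalue formula that $A_1$ acts as $-\gaussm{d}$ on $W_d$, so your Perron--Frobenius argument indeed applies), and the sign structure $\chi_Z=\tfrac12 j-v_+$ of case~(a) spares you the paper's final observation that $(X_1,X_1)$ is not cross-intersecting.
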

\begin{proof}
  By Theorem \ref{thm_es}, we have $\chi_Y, \chi_Z \in W_{0} \perp W_d$. 
  As in Theorem 16 of \cite{MR2755082} $W_{0}$ is spanned by $\chi_{X_1}+\chi_{X_2}$, and $W_d$ is spanned by $\chi_{X_1}-\chi_{X_2}$. 
  Hence $\chi_Y, \chi_Z \in \{ \chi_{X_1}, \chi_{X_2}\}$ as $\chi_Y, \chi_Z, \chi_{X_1}, \chi_{X_2}$ are $0$-$1$-vectors with $\chi_{X_1} + \chi_{X_2} = j$. 
  Hence without loss of generality $Y = X_1$. Since $(X_1, X_1)$ is not a cross-intersecting EKR set, we have $Z = X_2$.
\end{proof}

\subsection{The Parabolic Quadric and the Symplectic Polar Space, $d$ even}

If a cross-intersecting EKR set $(Y, Z)$ of $Q(2d, q)$ satisfies $\chi_Y, \chi_Z \in W_{0} \perp W_1$, then $Y = Z$, and $Y$ is an EKR set as before.
So only the case $\chi_Y, \chi_Z \in W_{0} \perp W_1 \perp W_d$ remains. In the following denote $W_1$ by $V_-$ and $W_d$ by $V_+$.
Furthermore, as in Lemma \ref{lem_tok} we write 
\begin{align*}
 \chi_Y &= \alpha j + v_- + v_+ \\
 &= \frac{|Y|}{n} j + v_- + v_+ \\
 &= \frac{\lambda_b}{k+\lambda_b} j + v_- + v_+
 \intertext{ and }
 \chi_Z &= \alpha j + v_- + v_+ \\
 &= \frac{|Z|}{n} j + v_- + v_+ \\
 &= \frac{\lambda_b}{k+\lambda_b} j + v_- - v_+
\end{align*}
with $v_- \in V_-$ and $v_+ \in V_+$.
We need the following well-known lemma.

\begin{lemma}\label{lem_nbr}
  Let $\chi \in \langle j \rangle \perp V$ for some eigenspace $V$ of an (extended weight) adjacency matrix of a $k$-regular graph with $n$ vertices associated with eigenvalue $\lambda$. 
  Then the characteristic vector $e_i$ of the $i$-th vertex satisfies
  \begin{align*}
    e_i^T A \chi = \frac{\chi^T j}{n} (k - \lambda) + \lambda e_i^T \chi.
  \end{align*}
\end{lemma}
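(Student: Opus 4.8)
The plan is to decompose $\chi$ along the orthogonal direct sum $\langle j \rangle \perp V$ and to exploit that both $j$ and every vector in $V$ are eigenvectors of $A$. First I would write $\chi = c\, j + v$ with $v \in V$ and $c$ a scalar. Since $j$ is an eigenvector of $A$ with eigenvalue $k$ (the graph is $k$-regular and $j$ is an eigenvector by assumption) and $v$ lies in the eigenspace $V$ associated with $\lambda$, applying $A$ yields
\begin{align*}
  A\chi = c\, A j + A v = c k\, j + \lambda\, v.
\end{align*}

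Next I would pin down the scalar $c$ by projecting onto $j$. Because the decomposition is orthogonal we have $v^T j = 0$, so $\chi^T j = c\, j^T j = c n$, giving $c = \chi^T j / n$. Pairing instead with $e_i$ and using $e_i^T j = 1$ gives $e_i^T \chi = c + e_i^T v$, so that
\begin{align*}
  e_i^T v = e_i^T \chi - \frac{\chi^T j}{n}.
\end{align*}

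Finally, I would substitute these into $e_i^T A\chi$. From the expression for $A\chi$ above, together with $e_i^T j = 1$,
\begin{align*}
  e_i^T A \chi = c k + \lambda\, (e_i^T v) = \frac{\chi^T j}{n}\, k + \lambda \Bigl( e_i^T \chi - \frac{\chi^T j}{n} \Bigr),
\end{align*}
and collecting the two $\chi^T j / n$ terms produces exactly $\frac{\chi^T j}{n}(k - \lambda) + \lambda\, e_i^T \chi$, as claimed.

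I do not expect any genuine obstacle here: the result is a routine consequence of the spectral action of $A$ combined with the orthogonality $V \perp \langle j \rangle$ and the normalizations $j^T j = n$ and $e_i^T j = 1$. The only point that requires a little care is reading the hypothesis $\chi \in \langle j \rangle \perp V$ as an \emph{orthogonal} direct sum, since this is what simultaneously guarantees the clean eigenvalue action $A\chi = ck\,j + \lambda v$ and the vanishing of the cross term $v^T j$ used to identify $c$; everything else is bookkeeping.
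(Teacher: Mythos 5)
Your proof is correct and follows essentially the same route as the paper: decompose $\chi = \alpha j + v$ with $\alpha = \chi^T j / n$, apply $A$ using the eigenvalue action on $j$ and $v$, and rewrite $\lambda v$ in terms of $\lambda\chi$. The only cosmetic difference is that you eliminate $v$ via $e_i^T v = e_i^T\chi - \alpha$ after pairing with $e_i$, whereas the paper substitutes $v = \chi - \alpha j$ before pairing; the computation is identical.
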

\begin{proof}~
  As $\chi \in \langle j \rangle \perp V$, we can write $\chi = \alpha j + v$ for some $v \in V$ and $\alpha = \frac{\chi^T j}{n}$. Then
  \begin{align*}
   e_i^T A \chi &= e_i^T A (\alpha j + v)\\
   &= e_i^T (\alpha k j + \lambda v)\\
   &= e_i^T (\alpha (k-\lambda) j + \lambda \chi)\\
   &= \frac{\chi^T j}{n} (k - \lambda) + \lambda e_i^T \chi.
  \end{align*}
\end{proof}

\begin{cor}\label{lem_inmbs}
  Let $\chi, \psi \in \langle j \rangle \perp V_- \perp V_+$ for some eigenspaces $V_-$, respectively, $V_+$ of a (extended weight) adjacency matrix of the graph with eigenvalue $\lambda_-$, respectively, $\lambda_+$.
  If $\chi = \alpha j + v_- + v_+$ and $\psi = \alpha j + v_- - v_+$ for some $\alpha \in \bbR$, $v^- \in V_-$, and $v^+ \in V^+$, then
  \begin{align*}
    &e_i^T A \chi = \frac{(\chi+\psi)^T j}{2n} (k - \lambda_-) + \frac{(\lambda_-+\lambda_+)}{2} e_i^T \chi\\
    &e_i^T A \psi = \frac{(\chi+\psi)^T j}{2n} (k - \lambda_-) + \frac{(\lambda_- - \lambda_+)}{2} e_i^T \psi
  \end{align*}
\end{cor}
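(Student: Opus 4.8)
The plan is to collapse the two-eigenspace situation back onto the single-eigenspace Lemma \ref{lem_nbr} by passing to the sum and difference of $\chi$ and $\psi$. Put $s := \tfrac12(\chi+\psi)$ and $w := \tfrac12(\chi-\psi)$. Reading off the hypotheses $\chi = \alpha j + v_- + v_+$ and $\psi = \alpha j + v_- - v_+$ one immediately gets $s = \alpha j + v_-$ and $w = v_+$. Thus $s$ lies in $\langle j \rangle \perp V_-$ and has exactly the shape required by Lemma \ref{lem_nbr} (with eigenvalue $\lambda_-$), while $w \in V_+$; since every eigenspace is orthogonal to $\langle j \rangle$ we have $w^T j = 0$, so $w$ also satisfies the hypothesis of Lemma \ref{lem_nbr} (with eigenvalue $\lambda_+$), but now with vanishing $j$-component.

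First I would apply Lemma \ref{lem_nbr} to $s$. Since $s^T j = (\chi+\psi)^T j / 2$, this gives
\begin{align*}
  e_i^T A s = \frac{(\chi+\psi)^T j}{2n}\,(k-\lambda_-) + \lambda_-\, e_i^T s .
\end{align*}
Next I would apply the same lemma to $w$. Because $w^T j = 0$, the first term vanishes and one is left with the purely multiplicative identity
\begin{align*}
  e_i^T A w = \lambda_+\, e_i^T w .
\end{align*}
Finally, using $\chi = s + w$ and $\psi = s - w$ together with linearity of the map $x \mapsto e_i^T A x$, one has $e_i^T A\chi = e_i^T A s + e_i^T A w$ and $e_i^T A\psi = e_i^T A s - e_i^T A w$. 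Substituting the two displayed identities and re-expressing $e_i^T s = \tfrac12(e_i^T\chi + e_i^T\psi)$ and $e_i^T w = \tfrac12(e_i^T\chi - e_i^T\psi)$ then yields the two asserted formulas after collecting the coefficients of $e_i^T\chi$ and $e_i^T\psi$; in particular both identities inherit the common prefactor $\frac{(\chi+\psi)^T j}{2n}(k-\lambda_-)$ coming from the $j$-part of $s$.

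The argument is entirely routine, so I do not expect a genuine obstacle. The one point that must be handled carefully is the observation that the difference $w = v_+$ has zero $j$-component: it is precisely this that kills the $(k-\lambda_+)$-contribution and leaves a single prefactor governed by $\lambda_-$ in both lines. The remaining care is purely bookkeeping, namely tracking which of $e_i^T\chi$ and $e_i^T\psi$ each of the half-sums $e_i^T s$ and $e_i^T w$ feeds into when the two pieces are recombined, so that the coefficients $\tfrac12(\lambda_-+\lambda_+)$ and $\tfrac12(\lambda_--\lambda_+)$ land on the correct characteristic vectors.
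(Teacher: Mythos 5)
Your decomposition into $s=\tfrac12(\chi+\psi)\in\langle j\rangle\perp V_-$ and $w=\tfrac12(\chi-\psi)\in V_+$, followed by applying Lemma \ref{lem_nbr} to each piece and recombining, is exactly the paper's own argument (the paper works with $\chi\pm\psi$ rather than their halves, which changes nothing). There is, however, one point where your write-up asserts more than the computation delivers: substituting $e_i^Ts=\tfrac12(e_i^T\chi+e_i^T\psi)$ and $e_i^Tw=\tfrac12(e_i^T\chi-e_i^T\psi)$ and collecting coefficients honestly gives
\begin{align*}
e_i^TA\chi = \frac{(\chi+\psi)^Tj}{2n}(k-\lambda_-)+\frac{\lambda_-+\lambda_+}{2}\,e_i^T\chi+\frac{\lambda_--\lambda_+}{2}\,e_i^T\psi,
\end{align*}
and the analogous expression for $e_i^TA\psi$ with the roles of $\chi$ and $\psi$ in the last two terms exchanged. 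The cross term $\frac{\lambda_--\lambda_+}{2}\,e_i^T\psi$ (respectively $\frac{\lambda_--\lambda_+}{2}\,e_i^T\chi$) does not vanish unless $\lambda_-=\lambda_+$, so the recombination does not land on the two displayed identities of the corollary as stated; your claim that it ``yields the two asserted formulas'' glosses over this mismatch. To be fair, the paper's own proof ends with the identical recombination and the identical leap, so the defect lies in the statement of the corollary (which apparently drops the cross terms) rather than in your method; but read literally as a proof of the statement as printed, the final step does not go through, and in any application one must keep track of the extra $e_i^T\psi$ (resp.\ $e_i^T\chi$) contribution.
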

\begin{proof}
  We have $\chi + \psi \in \langle j \rangle \perp V_-$ and $\chi - \psi \in V_+$. By Lemma \ref{lem_nbr} and $j^T v^- = 0 = j^T v^+$,
  \begin{align*}
    &e_i^T A (\chi + \psi) = \frac{(\chi+\psi)^T j}{n} (k - \lambda_-) + \lambda_- e_i^T (\chi+\psi)\\
    &e_i^T A (\chi - \psi) = \lambda_+ e_i^T (\chi-\psi).
  \end{align*}
  Now the equations $2 e_i^T A \chi = e_i^T A (\chi + \psi) + e_i^T A (\chi - \psi)$ and $2 e_i^T A \psi = e_i^T A (\chi + \psi) - e_i^T A (\chi - \psi)$ yield the assertion.
\end{proof}

\begin{lemma}\label{lem_ev_q0}
  For the adjacency matrix $A_{d-s}$, $0 < s < d$, the eigenspace $W_1$ is associated with eigenvalue
  \begin{align*}
    \lambda_{-,s} := -\gauss{d-1}{s} q^{\binom{d-s}{2}} + \gauss{d-1}{s-1} q^{\binom{d-s+1}{2}},
  \end{align*}
  the eigenspace $W_d$ is associated with eigenvalue
  \begin{align*}
    \lambda_{+,s} := (-1)^{d-s} \gauss{d}{s} q^{\binom{d-s}{2}},
  \end{align*}
  and 
  \begin{align*}
   k_s := \gauss{d}{s} q^{\binom{d-s+1}{2}} = \left( \gauss{d-1}{s} + \gauss{d-1}{s-1} q^{d-s} \right) q^{\binom{d-s+1}{2}}.
  \end{align*}
\end{lemma}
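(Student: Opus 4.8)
The plan is to read off the three entries $P_{0,d-s}$, $P_{1,d-s}$ and $P_{d,d-s}$ of the eigenmatrix $P$ from the explicit formula in \cite[Theorem 4.3.6]{vanhove_phd}, specialised to $\pstype = 1$, and then to simplify the resulting $q$-Krawtchouk expressions into the compact forms claimed. Recall from Section \ref{sec_assoc} that, with the convention $A_j = \sum_i P_{ij} E_i$, the eigenvalue of $A_{d-s}$ on the eigenspace $W_i$ is exactly $P_{i,d-s}$; hence $k_s = P_{0,d-s}$, $\lambda_{-,s} = P_{1,d-s}$ and $\lambda_{+,s} = P_{d,d-s}$. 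Before simplifying I would anchor the row indexing to the cometric ordering $W_0, W_1, \ldots, W_d$ of Section \ref{sec_assoc} by testing the boundary column $s=0$, where $A_{d-s} = A_d$: each claimed formula must reduce to the corresponding value in \eqref{eq_ev_disj}. Indeed, for $\pstype = 1$ one has $\binom{d-1}{2} + (d-1) = \binom{d}{2}$, so $\lambda_{-,0} = -q^{\binom{d}{2}}$ and $\lambda_{+,0} = (-1)^d q^{\binom{d}{2}}$ match \eqref{eq_ev_disj} at $r=1$ and $r=d$, confirming that the rows I am extracting really are the eigenspaces named $W_1$ and $W_d$.

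For the valency $k_s = P_{0,d-s}$ I would argue directly rather than manipulate the $i=0$ row. The number of generators meeting a fixed generator $g$ in a subspace of dimension exactly $s$ equals the number $\gauss{d}{s}$ of $s$-dimensional subspaces $U \le g$ times the number of generators $h \supseteq U$ with $h \cap g = U$. Passing to the quotient polar space $U^{\perp}/U$, which has rank $d-s$ and the same type $\pstype$, these $h$ are precisely the generators disjoint from $g/U$, of which there are $q^{\binom{d-s}{2} + (d-s)\pstype}$ by the top valency in rank $d-s$. With $\pstype=1$ the exponent becomes $\binom{d-s}{2} + (d-s) = \binom{d-s+1}{2}$, so $k_s = \gauss{d}{s} q^{\binom{d-s+1}{2}}$; the second displayed form is then one application of \eqref{eq_gaussian_rec} in the shape $\gauss{d}{s} = \gauss{d-1}{s} + q^{d-s}\gauss{d-1}{s-1}$.

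For the two remaining eigenvalues I would exploit that the first and last rows of $P$ are the simplest rows of the $q$-Krawtchouk array. The last row collapses to a single term, $P_{d,j} = (-1)^{j}\gauss{d}{j} q^{\binom{j}{2}}$, independent of $\pstype$; substituting $j = d-s$ and $\gauss{d}{d-s} = \gauss{d}{s}$ gives $\lambda_{+,s}$. The row $i=1$ is a degree-one $q$-Krawtchouk and therefore has exactly two terms; specialising to $\pstype=1$ and $j=d-s$, and rewriting the Gaussian coefficients via $\gauss{d-1}{d-s} = \gauss{d-1}{s-1}$ and $\gauss{d-1}{d-s-1} = \gauss{d-1}{s}$, yields $\lambda_{-,s} = -\gauss{d-1}{s}q^{\binom{d-s}{2}} + \gauss{d-1}{s-1} q^{\binom{d-s+1}{2}}$.

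The hard part is not conceptual but bookkeeping, and it is twofold. First, one must be sure that the eigenspace ordering of the cited matrix agrees with the cometric ordering of Section \ref{sec_assoc}, so that the extracted rows are genuinely $W_1$ and $W_d$; this is exactly what the $s=0$ boundary check against \eqref{eq_ev_disj} settles. Second, one must carry out the $q$-binomial manipulations that collapse the general Krawtchouk sum to a single term for $W_d$ and to two terms for $W_1$, keeping track of the exponent shifts generated by the identity $\binom{d-s}{2} + (d-s) = \binom{d-s+1}{2}$ once $\pstype = 1$ is imposed. These steps are routine but must be done carefully, since every exponent in the statement originates from precisely this shift.
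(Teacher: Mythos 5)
Your proposal is correct and follows essentially the same route as the paper: the paper's proof is precisely the substitution $j=1$, $j=d$ (and implicitly $j=0$) with $i=d-s$ into the eigenvalue formula of \cite[Theorem 4.3.6]{vanhove_phd}, followed by one application of \eqref{eq_gaussian_rec}. Your only deviations --- deriving $k_s$ by counting in the quotient polar space $U^\perp/U$ instead of from the $j=0$ row, and the $s=0$ consistency check against \eqref{eq_ev_disj} --- are harmless and both give the right values.
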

\begin{proof}
  See \cite[Theorem 4.3.6]{vanhove_phd}, where
  the eigenvalue of $W_j$ for $A_{i}$ is given by
  \begin{align*}
    \sum_{0, j-i \leq u \leq  d-i,j} (-1)^{j+u} \gauss{d-j}{d-i-u} \gauss{j}{u} q^{(u+i-j)(u+i-j+2\pstype -1)/2 + \binom{j-u}{2}}.
  \end{align*}
  For $j=1$, respectively, $j=d$, and $i=d-s$, this formula yields the assertion.
  The last equality is an application of \eqref{eq_gaussian_rec}.
\end{proof}

\begin{prop}\label{prop_q0_inmbs}
  Let $(Y, Z)$ be a cross-intersecting EKR set of $Q(2d, q)$ or $W(2d-1, q)$, $d$ even, of maximum size such that $Y \cap Z \neq Y$. Let $G \in Y$.
  \begin{enumerate}[(a)]
   \item If $d-s$ is even, then $G$ meets $0$ elements of $Z$ in dimension $s$.
   \item If $d-s$ is odd, then $G$ meets $0$ elements of $Y$ in dimension $s$.
   \item If $d-s$ is even, then $G$ meets 
   \begin{align*}
      \gauss{d}{s} q^{\binom{d-s}{2}}
   \end{align*}
   elements of $Y$ in dimension $s$.
   \item If $d-s$ is odd, then $G$ meets
   \begin{align*}
      \gauss{d}{s} q^{\binom{d-s}{2}}
   \end{align*}
   elements of $Z$ in dimension $s$.
   \end{enumerate}
   In particular, $Y \cap Z = \emptyset$.
\end{prop}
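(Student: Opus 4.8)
The plan is to evaluate the two relevant intersection numbers directly with the eigenvalue method of Corollary~\ref{lem_inmbs}. Since $(Y,Z)$ reaches the bound and $Y\cap Z\neq Y$, we are in case (c) of Lemma~\ref{lem_tok}: with $V_-=W_1$ and $V_+=W_d$ we have $\chi_Y=\alpha j+v_-+v_+$ and $\chi_Z=\alpha j+v_--v_+$, where $v_-\in W_1$, $v_+\in W_d$, and
\[
  \alpha=\frac{|Y|}{n}=\frac{\lambda_b}{k+\lambda_b}=\frac{q^{\binom{d}{2}}}{q^{\binom{d}{2}+d}+q^{\binom{d}{2}}}=\frac{1}{q^d+1}.
\]
For a generator $G$ the number $e_G^T A_{d-s}\chi_Y$ is exactly the number of elements of $Y$ meeting $G$ in dimension $s$, and similarly for $Z$; so the whole proposition is about computing these two numbers for $G\in Y$.

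I would apply Corollary~\ref{lem_inmbs} with $A=A_{d-s}$, $\chi=\chi_Y$, $\psi=\chi_Z$, feeding in the eigenvalues $\lambda_{-,s}$ (on $W_1=V_-$), $\lambda_{+,s}$ (on $W_d=V_+$) and valency $k_s$ from Lemma~\ref{lem_ev_q0}. The resulting expressions are affine in the indicators $e_G^T\chi_Y$ and $e_G^T\chi_Z$ of $G\in Y$ and $G\in Z$. The heart of the matter, and the only place I expect genuine work, is the simplification of the scalar coefficients. Using the Gaussian recursion~\eqref{eq_gaussian_rec} as $\gauss{d}{s}=\gauss{d-1}{s}+q^{d-s}\gauss{d-1}{s-1}$ together with the identity $\gauss{d-1}{s}(q^s-1)=\gauss{d-1}{s-1}(q^{d-s}-1)$, one should obtain the crucial collapse
\[
  \frac{|Y|}{n}\,(k_s-\lambda_{-,s})=q^{\binom{d-s}{2}}\gauss{d-1}{s},
\]
while the sign $(-1)^{d-s}$ in $\lambda_{+,s}$ makes $\tfrac12(\lambda_{-,s}+\lambda_{+,s})$ and $\tfrac12(\lambda_{-,s}-\lambda_{+,s})$ equal to $q^{\binom{d-s}{2}+(d-s)}\gauss{d-1}{s-1}$ or to $-q^{\binom{d-s}{2}}\gauss{d-1}{s}$ according to the parity of $d-s$.

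Feeding this back, for $G\in Y$ (so $e_G^T\chi_Y=1$) the coefficients assemble exactly via $q^{\binom{d-s}{2}}\gauss{d-1}{s}+q^{\binom{d-s}{2}+(d-s)}\gauss{d-1}{s-1}=q^{\binom{d-s}{2}}\gauss{d}{s}$ and $q^{\binom{d-s}{2}}\gauss{d-1}{s}-q^{\binom{d-s}{2}}\gauss{d-1}{s}=0$. Hence for any $G\in Y\setminus Z$ (such $G$ exist precisely because $Y\cap Z\neq Y$), where $e_G^T\chi_Z=0$, the $Z$-count vanishes when $d-s$ is even and the $Y$-count vanishes when $d-s$ is odd, giving (a) and (b); the two surviving counts collapse to $q^{\binom{d-s}{2}}\gauss{d}{s}$, giving (c) and (d).

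It remains to upgrade ``$G\in Y\setminus Z$'' to ``$G\in Y$'', that is, to prove $Y\cap Z=\emptyset$; this is the content of the final assertion and the step that needs a separate idea, since the spectral identities above are by themselves consistent with $Y\cap Z\neq\emptyset$. Here I would argue by a parity contradiction: suppose $H\in Y\cap Z$, fix some $G\in Y\setminus Z$, and set $s=\dim(G\cap H)$. Since $H\neq G$ and $(Y,Z)$ is cross-intersecting we have $0<s<d$, so (a) and (b) apply to $G$. Viewing $H$ as an element of $Z$, part (a) forces $d-s$ odd; viewing the same $H$ as an element of $Y$, part (b) forces $d-s$ even --- a contradiction. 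Therefore $Y\cap Z=\emptyset$, whence $Y\setminus Z=Y$ and (a)--(d) hold for every $G\in Y$. The main obstacle is thus twofold: the Gaussian-binomial bookkeeping that yields the clean four-way split, and the recognition that the clean statement presupposes $G\notin Z$, so that $Y\cap Z=\emptyset$ must be bootstrapped from (a) and (b) by the parity argument rather than read off the eigenvalues.
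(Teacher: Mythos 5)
Your proposal is correct and follows essentially the same route as the paper: apply Corollary~\ref{lem_inmbs} to $A_{d-s}$ with the eigenvalues of Lemma~\ref{lem_ev_q0}, use $\alpha=1/(q^d+1)$ from maximality, and collapse the coefficients via \eqref{eq_gaussian_rec} into the four-way parity split. If anything you are more careful than the paper, which dismisses parts (a), (d) and the claim $Y\cap Z=\emptyset$ as ``similar or trivial'': your observation that the counts are affine in \emph{both} indicators $e_G^T\chi_Y$ and $e_G^T\chi_Z$, and your parity bootstrap deducing $Y\cap Z=\emptyset$ from (a) and (b) applied to some $G\in Y\setminus Z$, make explicit exactly the steps the paper leaves to the reader.
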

\begin{proof}
  We can calculate these numbers with Lemma \ref{lem_ev_q0} and Corollary \ref{lem_inmbs} by choosing $\chi_{\{G\}}$ as $e_i$. 
  For $A_{d-s}$ the parameters are given by
  \begin{align*}
   k_s - \lambda_{-,s} &= \left( \gauss{d-1}{s} + \gauss{d-1}{s-1} q^{d-s} \right) q^{\binom{d-s+1}{2}} \\
   &+ \gauss{d-1}{s} q^{\binom{d-s}{2}} - \gauss{d-1}{s-1} q^{\binom{d-s+1}{2}}\\
   &= q^{\binom{d-s}{2}} \gauss{d-1}{s} \left( q^{d-s} +1 \right) + q^{\binom{d-s+1}{2}} \gauss{d-1}{s-1} \left( q^{d-s} -1 \right) \\
   &\stackrel{\text{Def.}}{=} q^{\binom{d-s}{2}} \gauss{d-1}{s} \left( q^{d-s} +1 \right) + q^{d-s} \cdot q^{\binom{d-s}{2}} \gauss{d-1}{s} \left( q^s -1 \right) \\
   &=q^{\binom{d-s}{2}} \gauss{d-1}{s} \left( q^{d}+1 \right),
   \intertext{for $d-s$ even}
   & \lambda_{-,s} + \lambda_{+,s} \stackrel{\eqref{eq_gaussian_rec}}{=} 2 \gauss{d-1}{s-1} q^{\binom{d-s+1}{2}},\\
   & \lambda_{-,s} - \lambda_{+,s} \stackrel{\eqref{eq_gaussian_rec}}{=} -2 \gauss{d-1}{s} q^{\binom{d-s}{2}},
   \intertext{for $d-s$ odd}
   & \lambda_{-,s} + \lambda_{+,s} \stackrel{\eqref{eq_gaussian_rec}}{=} -2 \gauss{d-1}{s} q^{\binom{d-s}{2}},\\
   & \lambda_{-,s} - \lambda_{+,s} \stackrel{\eqref{eq_gaussian_rec}}{=} 2 \gauss{d-1}{s-1} q^{\binom{d-s+1}{2}}.
   \intertext{By assumption $(Y, Z)$ is of maximum size, so by Lemma \ref{lem_tok} (recall $\lambda_+ = \lambda_{+,0}$, and $\lambda_- = \lambda_{-,0}$)}
   &\chi_Z^T j = |Z| = |Y| = \chi_Y^T j = \frac{n\lambda_+}{k+\lambda_+}.
   \intertext{Hence,}
   \frac{(\chi_Y+\chi_Z)^T j}{2n} &= \frac{\lambda_+}{k + \lambda_+}
   = \frac{q^{\binom{d}{2}}}{q^{\binom{d+1}{2}} + q^{\binom{d}{2}}}
   = \frac{1}{q^{d}+1}.
   \intertext{Hence by Corollary \ref{lem_inmbs},}
   e_i^T A \chi &= \frac{k_s - \lambda_{-,s}}{q^d+1} + \frac{(\lambda_-+\lambda_+)}{2} e_i^T \chi\\
   &= q^{\binom{d-s}{2}} \gauss{d-1}{s} + \frac{(\lambda_-+\lambda_+)}{2} e_i^T \chi.
   \intertext{If $d-s$ is even and $e_i^T \chi = 1$, then by \eqref{eq_gaussian_rec}}
   e_i^T A \chi &= q^{\binom{d-s}{2}} \gauss{d-1}{s} + q^{\binom{d-s+1}{2}} \gauss{d-1}{s-1} = q^{\binom{d-s}{2}} \gauss{d}{s}.
   \intertext{If $d-s$ is odd and $e_i^T \chi = 1$, then }
   e_i^T A \chi &= q^{\binom{d-s}{2}} \gauss{d-1}{s} - q^{\binom{d-s}{2}} \gauss{d-1}{s} = 0.
  \end{align*}
  All the remaining cases are either similar or trivial.
\end{proof}

Now we have a strong combinatorial information about cross-intersecting EKR sets $(Y, Z)$ of maximum size which are not EKR sets.
By adding some geometrical arguments this leads to a complete classification of cross-intersecting EKR sets in these parabolic and symplectic polar spaces
as we shall see in the following.

\begin{lemma}\label{lem_nmb_dm1z}
  Let $(Y, Z)$ be a cross-intersecting EKR set of $Q(2d, q)$ or $W(2d-1, q)$, $d$ even, of maximum size. 
  Let $G, H \in Y$ disjoint (see Proposition \ref{prop_q0_inmbs} (c)). Let $\pi_1, \ldots, \pi_{\gaussm{d}} \subseteq G$ be the $\gaussm{d}$ subspaces of dimension $d-1$ of $G$.
  Then the following holds:
  \begin{enumerate}[(a)]
   \item Exactly $\gaussm{d}$ elements $z_1, \ldots, z_{\gaussm{d}}$ of $Z$ meet $G$ in dimension $d-1$.\label{lem_nmb_dm1z_a}
   \item We have $\{ z_i ~|~ i \in \{ 1, \ldots, \gaussm{d}\} \} = \{ \langle \pi_i, \pi_i^\perp \cap H \rangle ~|~ i \in \{ 1, \ldots, \gaussm{d}\} \}$.
  \end{enumerate}
\end{lemma}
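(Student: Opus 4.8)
The plan is to treat the two assertions separately, deriving part~(a) directly from the combinatorial data already extracted and reserving the geometric work for part~(b). For part~(a), note that an element of $Z$ meeting $G$ in dimension $d-1$ is exactly an element of $Z$ meeting $G$ in dimension $s=d-1$, so that $d-s=1$ is odd. Hence part~(d) of Proposition \ref{prop_q0_inmbs} applies verbatim and tells us that $G$ meets exactly $\gauss{d}{d-1} q^{\binom{1}{2}} = \gauss{d}{d-1} = \gaussm{d}$ elements of $Z$ in dimension $d-1$. This is precisely the count claimed in part~(a), and it simultaneously pins down that there are as many such elements as there are $(d-1)$-spaces $\pi_i$ of $G$, which is what will make the pigeonhole step in part~(b) work.

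For part~(b) I would first prove a uniqueness statement: each hyperplane $\pi_i$ of $G$ determines \emph{at most} one element of $Z$ meeting $G$ in dimension $d-1$ along $\pi_i$. If $z$ is any generator with $z \cap G = \pi_i$, then, being totally isotropic and containing $\pi_i$, it satisfies $z \subseteq z^\perp \subseteq \pi_i^\perp$. If in addition $z \in Z$, then cross-intersection with $H \in Y$ forces $z \cap H \neq 0$, and since $z \cap H \subseteq \pi_i^\perp \cap H$, the intersection $z \cap H$ is a nonzero subspace of $\pi_i^\perp \cap H$. The crux is then the geometric claim that $r_i := \pi_i^\perp \cap H$ is a single point: because $G$ and $H$ are opposite (disjoint) generators they span a non-degenerate section of $\scrP$ and the form induces the usual perp-duality between them, so that the hyperplane $\pi_i$ of $G$ corresponds to a unique point $r_i$ of $H$ via $\pi_i \mapsto \pi_i^\perp \cap H$; using $G \subseteq \pi_i^\perp$ and $\dim \pi_i = d-1$, a dimension count yields $\dim(\pi_i^\perp \cap H)=1$. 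Granting this, $z \cap H = r_i$, hence $z \supseteq \langle \pi_i, r_i \rangle$, and since both spaces have dimension $d$ we conclude $z = \langle \pi_i, \pi_i^\perp \cap H \rangle$. Note that $\langle \pi_i, r_i \rangle$ is itself totally isotropic (the point $r_i \in \pi_i^\perp$ is singular and perpendicular to the totally isotropic $\pi_i$), so it is a genuine generator and is the only possible member of $Z$ meeting $G$ exactly in $\pi_i$.

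Finally I would assemble the count. By part~(a) there are exactly $\gaussm{d}$ elements $z_1,\ldots,z_{\gaussm{d}}$ of $Z$ meeting $G$ in dimension $d-1$, and each $z_j \cap G$ is one of the $\gaussm{d}$ hyperplanes of $G$. The previous paragraph shows that $z_j$ is completely determined by $z_j \cap G$, so $z_j \mapsto z_j \cap G$ is injective; as domain and range both have size $\gaussm{d}$, it is a bijection onto the set of all hyperplanes of $G$. Therefore $\{z_1,\ldots,z_{\gaussm{d}}\} = \{\langle \pi_i, \pi_i^\perp \cap H\rangle : i \in \{1,\ldots,\gaussm{d}\}\}$, which is assertion~(b). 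I expect the only real obstacle to be the uniform verification that $\pi_i^\perp \cap H$ is a point: the ambient dimension differs between $W(2d-1,q)$ and $Q(2d,q)$, and for $Q(2d,q)$ in even characteristic one must account for the nucleus when computing $\dim \pi_i^\perp$; in each of these cases the count still gives exactly a point, and everything else in the argument is elementary incidence geometry together with the counting supplied by part~(a).
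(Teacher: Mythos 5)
Your proof is correct and follows essentially the same route as the paper: part (a) is Proposition \ref{prop_q0_inmbs} (d) with $s=d-1$, and part (b) rests on $z \subseteq \pi_i^\perp$, the observation that $\pi_i^\perp \cap H$ is a point, and a pigeonhole count against the $\gaussm{d}$ hyperplanes of $G$. The only (harmless) difference is that you obtain injectivity of $z \mapsto z \cap G$ from the explicit determination $z = \langle \pi_i, \pi_i^\perp \cap H \rangle$, whereas the paper invokes Proposition \ref{prop_q0_inmbs} (b) to see that distinct elements of $Z$ cannot share a hyperplane of $G$.
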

\begin{proof}
  By Proposition \ref{prop_q0_inmbs} (d) and $Y \neq Z$, exactly $\gaussm{d}$ elements of $Z$ meet $Y$ in dimension $d-1$. This shows \eqref{lem_nmb_dm1z_a}.
  By Proposition \ref{prop_q0_inmbs} (b), $\dim(z_i \cap z_j) < d-1$ for $i \neq j$. 
  Hence, each hyperplane $\pi_i$ lies in exactly one element $z_j$, and $z_j$ satisfies $z_j \subseteq \pi_i^\perp$.
  Since $(Y, Z)$ is a cross-intersecting EKR set, all $z_j$ meet $H$ in at least a point.
  Since $\pi_i \subseteq G$ and $G \cap H = \emptyset$, we see that $\pi_i^\perp \cap H$ is a point.
  Hence,
  \begin{align*}
   \{ z_i ~|~ i \in \{ 1, \ldots, \gaussm{d}\} \} = \{ \langle \pi_i, \pi_i^\perp \cap H \rangle ~|~ i \in \{ 1, \ldots, \gaussm{d}\} \}.
  \end{align*}
\end{proof}
  
  We write
  \begin{align}
    Z_{G, H} &= \{ \{ z_i ~|~ i \in \{1, \ldots, \gaussm{d}\}\} \label{eq_lem_nmb_dm1z}\\
    &= \{ \langle \pi_i, \pi_i^\perp \cap H \rangle ~|~ i \in \{ 1, \ldots, \gaussm{d}\} \} \subseteq Z\notag
  \end{align}
  for $G, H \in Y$ whenever Lemma \eqref{lem_nmb_dm1z} in applicable.

\subsubsection{The Parabolic Quadric $Q(2d, q)$}

Let $h$ be a subspace of $\PG(2d, q)$. We write $Y \subseteq h$ if all elements of $Y$ are subspaces of $h$, $Y \cap h$ for all elements of $Y$ in $h$, and $Y \setminus h$ for all elements of $Y$ not in $h$.
\begin{lemma}\label{lem_q0inhyp_pre2}
  Let $G$ and $H$ be disjoint generators of $Q(2d, q)$.
  Then $h := \langle G, H \rangle \cap Q(2d, q)$ is isomorphic to $Q^+(2d-1, q)$.
\end{lemma}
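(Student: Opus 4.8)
The plan is to exploit the fact that two disjoint generators span a hyperplane, and then to read off the isomorphism type of the resulting quadric from two features that $G$ and $H$ themselves witness. First I would record that, since $G \cap H = \{0\}$ and $\dim G = \dim H = d$, the span satisfies $\dim \langle G, H \rangle = 2d$, so that $\langle G, H \rangle$ is a hyperplane of the ambient $\PG(2d, q)$ (which has vector space dimension $2d+1$). Consequently $h = \langle G, H\rangle \cap Q(2d, q)$ is the intersection of the parabolic quadric with a hyperplane, i.e.\ a quadric living in a $\PG(2d-1, q)$.

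Next I would invoke the standard trichotomy for hyperplane sections of a non-degenerate parabolic quadric (see \cite{hirschfeld1991general}): such a section is isomorphic either to $Q^+(2d-1, q)$, to $Q^-(2d-1, q)$, or to a cone $p\,Q(2d-2, q)$ with a single point $p$ as vertex (the tangent case). It then suffices to exclude the latter two. Note first that $G, H \subseteq \langle G, H \rangle = h$ and $G, H \subseteq Q(2d,q)$, so both $G$ and $H$ are totally singular subspaces of dimension $d$ contained in the section $h$.

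The cone is excluded by the disjointness hypothesis: every generator of $p\,Q(2d-2, q)$ passes through the vertex $p$, so if $h$ were such a cone we would get $p \in G \cap H$, contradicting $G \cap H = \{0\}$. The elliptic quadric is excluded by dimension: the maximal totally singular subspaces of $Q^-(2d-1, q)$ have dimension $d-1$, whereas $G \subseteq h$ is totally singular of dimension $d$. Hence the only surviving possibility is $h \cong Q^+(2d-1, q)$, as claimed.

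The one point requiring genuine care---and the place where the disjointness of $G$ and $H$ really does the work---is the exclusion of the tangent cone, which is equivalent to showing that the pole $\langle G, H\rangle^\perp$ is a non-singular point. If one prefers a self-contained argument to citing the trichotomy, this is precisely the step to verify by hand: the pole is the single point $G^\perp \cap H^\perp$ (of vector space dimension $1$, since $\langle G, H\rangle$ has dimension $2d$), and one checks that it cannot be singular, as otherwise it would lie in both $G$ and $H$. Everything else reduces to bookkeeping with the dimensions of the totally singular subspaces in each quadric type.
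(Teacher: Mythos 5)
Your proof is correct and follows essentially the same route as the paper: both arguments run through the trichotomy of hyperplane sections of $Q(2d,q)$, use the disjointness of $G$ and $H$ to rule out the degenerate (tangent cone) case, and use the presence of a totally singular subspace of dimension $d$ to rule out the elliptic case. You simply spell out the two exclusions in more detail than the paper does.
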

\begin{proof}
  The generators $G$ and $H$ are disjoint, hence $h \cap Q(2d, q)$ is not degenerate.
  The hyperplane $h$ obviously contains generators, hence $h \cap Q(2d, q)$ does not have type $Q^-(2(d-1)+1, q)$.
  Therefore, the intersection $h \cap Q(2d, q)$ is isomorphic to $Q^+(2d-1, q)$.
\end{proof}

\begin{lemma}\label{lem_q0inhyp_pre1}
  Let $(Y, Z)$ be a cross-intersecting EKR set of $Q(2d, q)$, $d$ even, of maximum size such that $Y \cap Z \neq Y$. 
    Let $G \in Y$. Let $\tilde{Y}$ be the set of the $q^{\binom{d}{2}}$ generators of $Y$ disjoint to $G$ (see Proposition \ref{prop_q0_inmbs}).
    \begin{enumerate}[(a)]
     \item There exists a hyperplane $h$ of type $Q^+(2d-1, q)$ such that $G, \tilde{Y} \subseteq h$.
     \item If $\tilde{G} \in Y \setminus h$, then $\tilde{G}$ meets all elements of $\tilde{Y}$ non-trivially.
     \item If $\tilde{G} \in Y$ and $\dim(\tilde{G} \cap H) = d-2$ for any $H \in \tilde{Y}$, then $\tilde{G}$ and the $q^{\binom{d}{2}}$ generators disjoint to $\tilde{G}$ are in $h$.
    \end{enumerate}
\end{lemma}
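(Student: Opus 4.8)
The plan is to treat the three parts in order, using part (a) as the engine for (b) and (c).

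For (a), fix any $H \in \tilde{Y}$ (possible since $|\tilde{Y}| = q^{\binom{d}{2}} \geq 1$ by Proposition \ref{prop_q0_inmbs}(c) with $s=0$) and set $h := \langle G, H\rangle$. As $G, H$ are disjoint generators, $\dim h = 2d$, so $h$ is a hyperplane, and by Lemma \ref{lem_q0inhyp_pre2} it has type $Q^+(2d-1,q)$, with $G, H \subseteq h$. It remains to show $H' \subseteq h$ for every other $H' \in \tilde{Y}$. The key observation is that, by Lemma \ref{lem_nmb_dm1z}(a), the set of $\gaussm{d}$ elements of $Z$ meeting $G$ in dimension $d-1$ is determined by $G$ alone: applying Lemma \ref{lem_nmb_dm1z} to the pair $(G,H')$ gives $Z_{G,H'} = Z_{G,H}$, since both equal the unique set of all elements of $Z$ meeting $G$ in a hyperplane. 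Writing these as $z_i = \langle \pi_i, \pi_i^\perp \cap H\rangle = \langle \pi_i, \pi_i^\perp \cap H'\rangle$, indexed so that $z_i \cap G = \pi_i$, each $z_i$ lies in $h$, hence so does the point $\pi_i^\perp \cap H'$. Running over all $\gaussm{d}$ hyperplanes $\pi_i$ of $G$, the points $\pi_i^\perp \cap H'$ exhaust the points of $H'$ (the assignment $\pi \mapsto \pi^\perp \cap H'$ is a bijection between hyperplanes of $G$ and points of $H'$, as $H' \cap G = 0$), so every point of $H'$ lies in $h$ and therefore $H' \subseteq h$. This proves (a); note in passing that $\tilde{Y}$ is then exactly the set of all $q^{\binom{d}{2}}$ generators of $h$ disjoint from $G$, since that is the disjointness valency of $Q^+(2d-1,q)$.

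For (b), suppose toward a contradiction that $\tilde{G} \in Y \setminus h$ is disjoint from some $H \in \tilde{Y}$. Since $H \in Y$, apply part (a) with $H$ in the role of $G$: there is a hyperplane $h_H$ of type $Q^+(2d-1,q)$ containing $H$ together with every generator of $Y$ disjoint from $H$, and $h_H = \langle H, G'\rangle$ for any such partner $G'$. Now $G \in Y$ is disjoint from $H$, so $h_H = \langle H, G\rangle = h$ (the last equality because $H \in \tilde{Y} \subseteq h$ and $G \subseteq h$ force $\langle G, H\rangle = h$). But $\tilde{G} \in Y$ is likewise disjoint from $H$, whence $\tilde{G} \subseteq h_H = h$, contradicting $\tilde{G} \notin h$. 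Thus $\tilde{G}$ meets every element of $\tilde{Y}$ non-trivially.

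For (c), apply part (a) to $\tilde{G}$ (its set of disjoint $Y$-partners is non-empty, again by Proposition \ref{prop_q0_inmbs}(c)): there is a hyperplane $h_{\tilde{G}}$ of type $Q^+(2d-1,q)$ containing $\tilde{G}$ and all $q^{\binom{d}{2}}$ generators of $Y$ disjoint from $\tilde{G}$, with $h_{\tilde{G}} = \langle \tilde{G}, G''\rangle$ for any such partner $G''$. The whole claim reduces to $h_{\tilde{G}} = h$: then $\tilde{G} \subseteq h_{\tilde{G}} = h$ and all its disjoint partners lie in $h_{\tilde{G}} = h$ simultaneously. To force $h_{\tilde{G}} = h$ it suffices to exhibit two mutually disjoint generators $K_1, K_2$ that lie in $\tilde{Y}$ and are disjoint from $\tilde{G}$: such $K_1, K_2$ lie in $h$ (being in $\tilde{Y}$) and in $h_{\tilde{G}}$ (being disjoint partners of $\tilde{G}$), and since they are disjoint, $\langle K_1, K_2\rangle$ is a hyperplane contained in $h \cap h_{\tilde{G}}$, forcing the two hyperplanes to coincide. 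Since $\tilde{Y}$ is exactly the set of generators of $h \cong Q^+(2d-1,q)$ disjoint from $G$, and since disjointness from $\tilde{G}$ is equivalent, for generators inside $h$, to disjointness from the totally singular subspace $U := \tilde{G} \cap h$ of dimension $d-1$ or $d$, the required pair is produced entirely inside $Q^+(2d-1,q)$.

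I expect the main obstacle to be precisely this last existence statement in (c): guaranteeing two mutually disjoint generators of $h$ that avoid both $G$ and $U$, uniformly in $q$ (including small $q$). I would handle it by a counting argument in $Q^+(2d-1,q)$, using that $U$ has codimension at most one in a generator (so the generators in $G$'s family meeting $U$ form a proper, controllable subfamily) and the hypothesis $\dim(\tilde{G} \cap H) = d-2$ to pin down how $U$ sits relative to $G$ and $H$; the count of generators disjoint from both $G$ and $U$ is then large enough to contain a disjoint pair. Parts (a) and (b) are, by contrast, essentially forced once the uniqueness in Lemma \ref{lem_nmb_dm1z}(a) is exploited.
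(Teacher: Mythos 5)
Your parts (a) and (b) are correct. For (a) you and the paper both lean on Lemma \ref{lem_nmb_dm1z}: the set $Z_{G,H}$ depends only on $G$ (it is the set of all elements of $Z$ meeting $G$ in a hyperplane) and lies in $h=\langle G,H\rangle$. You then argue directly that the $\gaussm{d}$ points $\pi_i^\perp\cap H'$ lie in $h$ and exhaust $H'$, whereas the paper derives a contradiction by packing the $\gaussm{d}$ distinct points $\tilde{H}\cap z_i$ into the $(d-1)$-dimensional subspace $\tilde{H}\cap h$; these are two faces of the same argument and both are fine. Your (b) is the paper's proof almost verbatim.

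Part (c) is where you have a genuine gap, and you have named it yourself: your reduction needs two mutually disjoint members of $\tilde{Y}$ that are simultaneously disjoint from $\tilde{G}$, and the promised counting argument is never carried out. This is not a routine omission. The pair must consist of elements of $Y$ (you correctly reduce this to generators of $h$ avoiding $G$ and $U=\tilde{G}\cap h$, using that $\tilde{Y}$ is all of them), but mutual disjointness is an additional constraint whose feasibility degenerates for small $q$, and verifying it would require a case analysis on $\dim U$ and on how $U$ sits relative to the two families of generators of $h\cong Q^+(2d-1,q)$. The deeper issue is that your route never uses the hypothesis $\dim(\tilde{G}\cap H)=d-2$, which is precisely the lever the paper pulls. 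The paper's mechanism is this: by Proposition \ref{prop_q0_inmbs}(b) with $d$ even, $\dim(G\cap\tilde{G})$ is even. If it is $0$, then $\tilde{G}\in\tilde{Y}\subseteq h$, and applying (a) to $\tilde{G}$ with the partner $G\in Y$ disjoint from $\tilde{G}$ places all of $\tilde{G}$'s disjoint partners in $\langle \tilde{G},G\rangle=h$. If it is at least $2$, then since $G\cap H=0$ the subspace $\langle \tilde{G}\cap H,\tilde{G}\cap G\rangle$ has dimension at least $(d-2)+2=d$, forcing $\tilde{G}=\langle \tilde{G}\cap H,\tilde{G}\cap G\rangle\subseteq\langle H,G\rangle=h$. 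No auxiliary disjoint pair is needed anywhere. You should replace your reduction by this dimension count; as written, your (c) is incomplete.
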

\begin{proof}
  By Proposition \ref{prop_q0_inmbs}, a generator $G \in Y$ is disjoint to $q^{\binom{d}{2}}$ generators of $Y$. 
  Let $H \in \tilde{Y}$.
  By Lemma \ref{lem_q0inhyp_pre2}, $h := \langle G, H \rangle$ has type $Q^+(2d-1, q)$. 
  We shall show $\tilde{Y} \subseteq h$.

  Suppose to the contrary that there exists a generator $\tilde{H} \in \tilde{Y}$ not in $h$.
  We define $Z_G$ in \eqref{eq_lem_nmb_dm1z} by 
  \begin{align*}
   Z_G &= Z_{G, H} \\
   &= \{ z_i ~|~ i \in \{ 1, \ldots, \gaussm{d}\}\} \subseteq Z.
  \end{align*}
  Set
  \begin{align*}
   \scrP := \{ \tilde{H} \cap z_i ~|~ i \in \{ 1, \ldots, \gaussm{d}\} \}.
  \end{align*}
  The generators of $Z_G$ intersect $G$ in a hyperplane of $G$, hence $\scrP$ is a set of points.
  For $i \neq j$ we have $G \subseteq \langle z_i \cap G, z_j \cap G \rangle^\perp$, so $(z_i \cap z_j) \setminus G$ is empty.
  Hence, $|\scrP| = |Z_G| = \gaussm{d}$.
  Furthermore, $Z_G \subseteq h$, so $\scrP \subseteq \tilde{H} \cap h$.
  Hence,
  \begin{align*}
    \gaussm{d} = |\scrP| \leq |\tilde{H} \cap h| = \gaussm{d-1}.
  \end{align*}
  This is a contradiction.
  Thus, $\tilde{Y} \subseteq h$.
  This proves (a).
  
  Assume that there exists a generator $\tilde{G} \in Y \setminus h$ with $\dim(H \cap \tilde{G})=0$.
  Then, by (a), $H$ is only disjoint to generators of $Y$ in $\langle \tilde{G}, H \rangle$.
  Hence, $\langle \tilde{G}, H \rangle = \langle G, H \rangle$.
  This contradicts $\tilde{G} \in Y \setminus h$.
  This proves (b).
  
  Suppose that there exists a generator $\tilde{G} \in Y \setminus h$ with $\dim(H \cap \tilde{G})=d-2$.
  By (b) and Proposition \ref{prop_q0_inmbs}, $\dim(G \cap \tilde{G}) \geq 2$. Hence,
  \begin{align*}
    \tilde{G} = \langle H \cap \tilde{G}, G \cap \tilde{G} \rangle \subseteq h,
  \end{align*}
  which contradicts $\tilde{G} \nsubseteq h$.
  This shows $\tilde{G} \in \tilde{Y}$.
  By (a), all elements disjoint to $\tilde{G}$ are in $\langle G, \tilde{G} \rangle = h$.
\end{proof}

We need the following bound.

\begin{lemma}\label{lem_bound_gens}
  Let $q \geq 2$. Let $d \geq 1$. Then
  \begin{align*}
    \prod_{i=1}^{d-1} (q^i+1) \leq \frac{2q^d}{q^d+1} \left( q^{\binom{d}{2}} - q^{\binom{d-1}{2}} + 1\right) + q^{\binom{d-2}{2} + 2(d-2)}.
  \end{align*}
\end{lemma}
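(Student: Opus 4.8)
The plan is to normalize the inequality, reduce it to a clean comparison between a partial product and an explicit rational expression, and then split into a uniform ``tail'' argument for large $d$ (governed by a single convergent infinite product) together with a short finite check for small $d$. First I would record the elementary identity $\binom{d-2}{2} + 2(d-2) = \binom{d}{2} - 1$ and the fact that $\prod_{i=1}^{d-1} q^i = q^{\binom{d}{2}}$. Dividing the claimed inequality by $q^{\binom d2}$ turns the left-hand side into $\prod_{i=1}^{d-1}(1+q^{-i})$ and the right-hand side into $\frac{2q^d}{q^d+1}\bigl(1 - q^{-(d-1)} + q^{-\binom d2}\bigr) + q^{-1}$, using $\binom d2 - \binom{d-1}2 = d-1$. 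Writing $\frac{2q^d}{q^d+1} = \frac{2}{1+q^{-d}}$, multiplying through by the positive factor $1+q^{-d}$, and using $(1+q^{-d})\prod_{i=1}^{d-1}(1+q^{-i}) = \prod_{i=1}^{d}(1+q^{-i})$, the statement becomes equivalent to
\begin{align*}
 \prod_{i=1}^{d}(1+q^{-i}) \leq 2 + q^{-1} - 2q^{-(d-1)} + q^{-(d+1)} + 2q^{-\binom d2}.
\end{align*}
All these steps are reversible, so it suffices to prove this reformulation for every $q \geq 2$ and $d \geq 1$.

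Next I would introduce the convergent infinite product $P(q) := \prod_{i=1}^{\infty}(1+q^{-i})$, which dominates every partial product, and bound its \emph{slack} $s(q) := 2 + q^{-1} - P(q)$ from below by $\tfrac18$ for all $q \geq 2$. For $q \geq 3$ this is immediate from $\log P(q) \leq \sum_{i\geq1} q^{-i} = \tfrac1{q-1} \leq \tfrac12$, giving $P(q) \leq e^{1/2} < 1.65$ and hence $s(q) > 0.35$. For $q=2$ one needs a genuinely sharp estimate: I would evaluate $\prod_{i=1}^{6}(1+2^{-i})$ explicitly and bound the tail by $\prod_{i\geq7}(1+2^{-i}) \leq \exp\bigl(\sum_{i\geq7}2^{-i}\bigr) = e^{2^{-6}}$, which yields $P(2) < 2.35$ and therefore $s(2) > \tfrac18$.

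With this in hand I would finish by cases. For $d \geq 5$ the two positive terms $q^{-(d+1)}$ and $2q^{-\binom d2}$ on the right only help, so it suffices to show $\prod_{i=1}^{d}(1+q^{-i}) \leq 2 + q^{-1} - 2q^{-(d-1)}$; since the left side is at most $P(q) = 2 + q^{-1} - s(q)$, this reduces to $2q^{-(d-1)} \leq s(q)$, and indeed $2q^{-(d-1)} \leq 2q^{-4} \leq 2\cdot 2^{-4} = \tfrac18 \leq s(q)$ for $d \geq 5$, $q \geq 2$. The finitely many remaining cases $d \in \{1,2,3,4\}$ I would settle directly: clearing the finitely many powers of $q^{-1}$ turns each into a polynomial inequality in $q$ that is routinely checked to hold for all $q \geq 2$ (the margin is smallest at $q=2$, where direct evaluation still leaves a positive gap).

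The main obstacle is that the inequality is genuinely close to tight: after normalization the gap at $(q,d) = (2,4)$ is only about $0.07$, so no crude estimate and no simple induction on $d$ can work, since the partial products grow multiplicatively while the right-hand side stays essentially bounded. Concretely, the delicate point is pinning down $P(2)$ sharply enough — one must know $P(2) < 2.5$, and the comfortable route is the partial-product-plus-tail bound above; everything else, including the handful of small-$d$ polynomial checks, is routine.
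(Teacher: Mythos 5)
Your reformulation is correct and your overall strategy (normalize by $q^{\binom{d}{2}}$, compare the partial product to the convergent infinite product $P(q)$, finish with a uniform tail bound plus finitely many small-$d$ checks) is genuinely different from the paper's, which instead proves the inequality by induction on $d$ with base cases $d\le 4$ and a one-step positivity computation. However, your argument has a concrete numerical error that leaves a hole. You claim $P(2)<2.35$ and hence $s(2)>\tfrac18$, but the very computation you describe does not give this: $\prod_{i=1}^{6}(1+2^{-i})=2.34736\ldots$ and the tail factor is bounded below by $1$ and above by $e^{1/64}\approx 1.0157$, so $P(2)\approx 2.3842$ (the true value is $2.38423\ldots$). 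Consequently $s(2)=2.5-P(2)\approx 0.1158<\tfrac18$, and your reduction for $d\ge 5$, which requires $2q^{-(d-1)}\le s(q)$, fails exactly at $(q,d)=(2,5)$, where $2q^{-(d-1)}=\tfrac18$. The inequality itself does hold there (the normalized gap is about $0.08$), but your proof as written does not establish it.

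The gap is repairable with small effort: either add $d=5$, $q=2$ to the finite checks (for $q\ge3$ and $d=5$ your bound $s(q)>0.35$ already suffices), or do not discard the positive term $q^{-(d+1)}$, in which case the requirement becomes $q^{-(d-1)}(2-q^{-2})\le s(q)$, i.e.\ $0.109375\le s(2)$ at $(2,5)$, which holds — but only if you first prove the sharper estimate $P(2)<2.3845$ rather than the false $P(2)<2.35$. Since the whole point of your approach is that the inequality is nearly tight at small parameters, this boundary case is precisely where the estimate must be done carefully; as it stands the claimed constant $\tfrac18$ is simply wrong.
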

\begin{proof}
  We will prove the assertion by induction over $d$.
  It can be easily checked that the assertion is true for $d \leq 4$.
  If the assertion is true for $d \geq 4$, then
  \begin{align*}
    \prod_{i=1}^{d} (q^i+1) &\leq (q^d+1) \left(\frac{2q^d}{q^d+1} \left( q^{\binom{d}{2}} - q^{\binom{d-1}{2}} + 1\right) + q^{\binom{d-2}{2} + 2(d-2)} \right) \\
    &\stackrel{(*)}{\leq} \frac{2q^{d+1}}{q^{d+1}+1} \left(q^{\binom{d+1}{2}} - q^{\binom{d}{2}} + 1\right) + q^{\binom{d-1}{2} + 2(d-1)}.
  \end{align*}
  The difference between the right hand side of (*) and the left hand side of (*) equals
  \begin{align*}
   \frac{q^{-\frac{3d}{2}-1}}{q^d+1} & ( 2{q}^{\frac{{d}^{2}}{2}+2d+3}-2{q}^{\frac{{d}^{2}}{2}+2d+2}-3{q}^{\frac{{d}^{2}}{2}+2d+1}\\
   &+2{q}^{\frac{{d}^{2}}{2}+d+2}-{q}^{\frac{{d}^{2}}{2}+d}-2{q}^{\frac{7d}{2}+2}+2{q}^{\frac{5d}{2}+2}-2{q}^{\frac{5d}{2}+1})
   \\
  \end{align*}
  which is a positive expression for $q \geq 2$ and $d \geq 4$.
\end{proof}

\begin{prop}\label{lem_q0inhyp}
  Let $(Y, Z)$ be a cross-intersecting EKR set of $Q(2d, q)$ of maximum size such that $Y \cap Z \neq Y$. Then there exists a hyperplane $h$ such that $Y, Z \subseteq h$.
\end{prop}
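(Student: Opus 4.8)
The plan is to fix a generator $G \in Y$, trap $G$ together with the generators of $Y$ disjoint from it inside a hyperbolic hyperplane $h$, and then show that any generator of $Y$ \emph{outside} $h$ would force $Y$ to contain more generators than its known size $|Y| = \prod_{i=1}^{d-1}(q^i+1)$ allows; the quantitative heart of this overflow is Lemma \ref{lem_bound_gens}.

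First I would let $\tilde{Y}$ be the $q^{\binom{d}{2}}$ generators of $Y$ disjoint from $G$ (Proposition \ref{prop_q0_inmbs}(c) with $s=0$), and invoke Lemma \ref{lem_q0inhyp_pre1}(a) to get a hyperplane $h$ of type $Q^+(2d-1,q)$ with $G, \tilde{Y} \subseteq h$. Two observations pin down the situation inside $h$. Since $d$ is even, two disjoint generators of $h$ lie in the same system, so $G$ and every element of $\tilde{Y}$ are \emph{latins}; and the number of latins of $h$ disjoint from $G$ equals the number $q^{\binom{d}{2}}$ of totally singular complements of $G$, which is exactly $|\tilde{Y}|$. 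Hence $\tilde{Y}$ is \emph{all} latins of $h$ disjoint from $G$. Moreover, by Proposition \ref{prop_q0_inmbs}(b) every element of $Y$ meets $G$ in even codimension, so $Y \cap h$ consists of latins only; as $h$ has exactly $\prod_{i=1}^{d-1}(q^i+1) = |Y|$ latins, proving $Y \subseteq h$ will automatically identify $Y$ with the full latin system.

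Now suppose for contradiction that some $\tilde{G} \in Y \setminus h$ exists and put $\pi := \tilde{G} \cap h$, a totally singular $(d-1)$-space. Lemma \ref{lem_q0inhyp_pre1}(b) shows $\tilde{G}$, hence $\pi$, meets every element of $\tilde{Y}$, so \emph{every} latin of $h$ disjoint from $G$ meets $\pi$, and by part (c) none of these meetings has dimension $d-2$. In parallel, Proposition \ref{prop_q0_inmbs}(c) applied to $\tilde{G}$ yields a second set $\tilde{Y}'$ of $q^{\binom{d}{2}}$ generators of $Y$ disjoint from $\tilde{G}$, which by Lemma \ref{lem_q0inhyp_pre1}(a) lie in a hyperbolic hyperplane $h' \neq h$. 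Because $\tilde{G}$ meets every member of $\tilde{Y}$, no member of $\tilde{Y}$ is disjoint from $\tilde{G}$, so $\tilde{Y} \cap \tilde{Y}' = \emptyset$ and $Y$ already contains the $2q^{\binom{d}{2}}$ generators of $\tilde{Y} \sqcup \tilde{Y}'$. For $q \geq 3$ this is already decisive, since $\prod_{i=1}^{d-1}(1+q^{-i}) < 2$ forces $2q^{\binom{d}{2}} > |Y|$. The genuinely hard case is $q = 2$, where $2q^{\binom{d}{2}}$ falls just short of $|Y|$: here one must enlarge the forced family by a further batch of generators of $Y$ localized again through Lemma \ref{lem_q0inhyp_pre1}(c) (those meeting $\tilde{G}$ in a fixed small dimension), contributing on the order of the last term $q^{\binom{d-2}{2}+2(d-2)}$ of Lemma \ref{lem_bound_gens}. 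Proving that these families are essentially disjoint and that their total is at least the right-hand side of Lemma \ref{lem_bound_gens} — which that lemma shows is strictly larger than $\prod_{i=1}^{d-1}(q^i+1) = |Y|$ — then yields the contradiction. This counting step is where I expect the real difficulty to lie: the bookkeeping of the permitted intersection dimensions from parts (b) and (c), and the latin/greek parity in the point-quotients $Q^+(2d-3,q)$; the cleanest packaging is probably a point--generator incidence count between $\pi$ and $\tilde{Y}$, with Lemma \ref{lem_bound_gens} supplying the numerical slack.

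Once $Y \subseteq h$ is established, $Y$ is the full latin system of $h$. By Lemma \ref{lem_tok}(c) we have $\chi_Z = \chi_Y - 2 v_+$, where $v_+$ is the $W_d$-component of $\chi_Y$; thus $Z$ is determined by $Y$. Since the greek system of $h$ together with $Y$ forms a cross-intersecting EKR set of $\scrP$ of the same maximum size (the latins/greeks example described before Theorem \ref{thm_ci_ekr_qp}), this same relation forces $Z$ to be precisely the greek system of $h$. Hence $Y, Z \subseteq h$, as claimed.
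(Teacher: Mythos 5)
Your skeleton is the same as the paper's: trap $G$ and the $q^{\binom{d}{2}}$ generators of $Y$ disjoint from it in a hyperbolic hyperplane $h$ via Lemma \ref{lem_q0inhyp_pre1}, assume an outlier $\tilde{G}\in Y\setminus h$, produce a second hyperplane $h'$ with its own batch $\tilde{Y}'$, use Lemma \ref{lem_q0inhyp_pre1}(b) to see $\tilde{Y}\cap\tilde{Y}'=\emptyset$, and contradict $|Y|=\prod_{i=1}^{d-1}(q^i+1)$. Your shortcut for $q\geq 3$ is correct and even cleaner than the paper's count: $\prod_{i=1}^{d-1}(1+q^{-i})<2$ indeed forces $|Y|<2q^{\binom{d}{2}}$ there.

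The genuine gap is $q=2$ (for $d\geq 4$), which you explicitly leave open. This is not a peripheral case: $\prod_{i=1}^{d-1}(1+2^{-i})$ tends to roughly $2.38$, so the two disjoint batches alone never suffice for $q=2$, and Lemma \ref{lem_bound_gens} exists precisely to close this case. The step you are missing, and which the paper carries out, is to enlarge the count in $h'$ by the $\gauss{d}{2}q^{\binom{d-2}{2}}$ further generators of $Y$ that Proposition \ref{prop_q0_inmbs} together with Lemma \ref{lem_q0inhyp_pre1}(c) forces into $h'$ (those meeting an element of $\tilde{Y}'$ in dimension $d-2$, equivalently meeting $\tilde{G}$ in a small even dimension), and to check these are distinct from the $q^{\binom{d}{2}}+1$ generators already located in $h$. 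This gives
\begin{align*}
  |Y| \;\geq\; 2\left(q^{\binom{d}{2}}+1\right) + \gauss{d}{2}\, q^{\binom{d-2}{2}},
\end{align*}
and since $\gauss{d}{2}\geq q^{2(d-2)}$ and $\tfrac{2q^d}{q^d+1}<2$, this strictly exceeds the right-hand side of Lemma \ref{lem_bound_gens} and hence $|Y|$, for every $q\geq 2$. You gesture at exactly this ("a further batch \ldots contributing on the order of $q^{\binom{d-2}{2}+2(d-2)}$") but do not identify the batch, prove its disjointness from the first family, or perform the comparison; as written, the proposition is unproved for $q=2$. A minor further remark: your closing derivation of $Z\subseteq h$ via the latin/greek identification is workable but leans on the classification that the paper only establishes afterwards (Theorem \ref{thm_ci_ekr_qp}); the quicker route is to run the same argument on $(Z,Y)$ and note that the generators $Z_{G,H}$ of Lemma \ref{lem_nmb_dm1z} already lie in $h$, pinning the two hyperplanes to each other.
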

\begin{proof}
  In the view of Lemma \ref{lem_q0inhyp_pre1}, we find a hyperplane $h$ that contains $G$, 
  the $q^{\binom{d}{2}}$ generators of $Y$ disjoint to $G$, and, by Proposition \ref{prop_q0_inmbs} and 
  Lemma \ref{lem_q0inhyp_pre1} (d), the $\gauss{d}{2} q^{\binom{d-2}{2}}$ generators which meet $G$ in dimension $d-2$.
  
  Suppose that there exists an element $G \in Y$ that is not in $h$.
  Then $G$ and the $q^{\binom{d}{2}}$ elements of $Y$ disjoint to $G$ lie in a second hyperplane $h' \neq h$ by Lemma \ref{lem_q0inhyp_pre1}.
  Lemma \ref{lem_q0inhyp_pre1} makes it clear that the at least $q^{\binom{d}{2}} + 1$ generators of $Y$ in $h$ are different to the at least $q^{\binom{d}{2}} + \gauss{d}{2} q^{\binom{d-2}{2}} + 1$ generators of $Y$ in $h'$.
  
  Hence, 
  \begin{align*}
    |Y| \geq 2\left(q^{\binom{d}{2}} + 1\right)  + \gauss{d}{2} q^{\binom{d-2}{2}}.
  \end{align*}
  According to Theorem \ref{thm_es},
  \begin{align*}
    |Y| = \prod_{i=1}^{d-1} (q^i+1).
  \end{align*}
  This contradicts Lemma \ref{lem_bound_gens}.
\end{proof}

\begin{satz}\label{satz_ci_ekr_q0}
  Let $(Y, Z)$ be a cross-intersecting EKR set of $Q(2d, q)$, or $W(2d-1, q)$, $q$ even, of maximum size such that $Y \cap Z \neq Y$.
  Then either $Y=Z$ and $Y$ is an EKR set, or $d$ even and $Y \cup Z$ are the generators of a subgeometry isomorphic to $Q^+(2d-1, q)$.
\end{satz}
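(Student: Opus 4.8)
\emph{Proof strategy.} The plan is to feed the spectral dichotomy of Theorem~\ref{thm_es} into Lemma~\ref{lem_tok} and then hand the geometry over to the work already done in Proposition~\ref{prop_q0_inmbs} and Proposition~\ref{lem_q0inhyp}. Since $W(2d-1,q)\cong Q(2d,q)$ for $q$ even, it is enough to treat $Q(2d,q)$ and transport the conclusion along this isomorphism; this is exactly why the symplectic case is restricted to $q$ even. By Theorem~\ref{thm_es} a pair of maximum size has $\chi_Y,\chi_Z\in W_0\perp W_1\perp W_d$, and from \eqref{eq_ev_disj} the eigenvalue of $A_d$ on $W_1$ is $-q^{\binom{d}{2}}$ while on $W_d$ it is $(-1)^d q^{\binom{d}{2}}$. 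The sign $(-1)^d$ drives the entire case split.

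First I would observe that the hypothesis $Y\cap Z\neq Y$ means $Y\neq Z$, so the first alternative $Y=Z$ is excluded and we must land in the second one; I would then show that this forces $d$ to be even. Indeed, if $d$ is odd the $W_1$- and $W_d$-eigenvalues coincide and are both the smallest eigenvalue, so $W_1\perp W_d=V_-$ and $\lambda_+<\lambda_b$; Lemma~\ref{lem_tok}(b) then gives $\chi_Y=\chi_Z$, i.e.\ $Y=Z$, a contradiction. Hence $d$ is even, $V_-=W_1$, $V_+=W_d$, and we are in case~(c) of Lemma~\ref{lem_tok} with a nonzero $W_d$-component (a vanishing component would again give $Y=Z$).

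With $d$ even in hand, the geometric conclusion is essentially already proved. By Proposition~\ref{prop_q0_inmbs}(c) with $s=0$, each $G\in Y$ is disjoint from $q^{\binom{d}{2}}\geq 1$ other elements of $Y$, so $Y$ contains a disjoint pair $G,H$; by Lemma~\ref{lem_q0inhyp_pre2}, $\langle G,H\rangle\cap Q(2d,q)\cong Q^+(2d-1,q)$. Proposition~\ref{lem_q0inhyp} furnishes a hyperplane $h$ with $Y,Z\subseteq h$, and since $h$ is constructed there as $\langle G,H\rangle$ for such a disjoint pair, $h\cap Q(2d,q)\cong Q^+(2d-1,q)$. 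It remains to verify that $Y\cup Z$ consists of \emph{all} generators of this subgeometry, which is pure counting: with $k=q^{\binom{d}{2}+d}$ and $\lambda_b=q^{\binom{d}{2}}$, Theorem~\ref{thm_es} gives $|Y|=|Z|=\tfrac{1}{q^d+1}n=\prod_{i=1}^{d-1}(q^i+1)$, and Proposition~\ref{prop_q0_inmbs} gives $Y\cap Z=\emptyset$, so $|Y\cup Z|=2\prod_{i=1}^{d-1}(q^i+1)=\prod_{i=0}^{d-1}(q^i+1)$, exactly the number of generators of $Q^+(2d-1,q)$. As every element of $Y\cup Z$ is a generator of $Q(2d,q)$ inside $h$, hence a generator of $h\cap Q(2d,q)$, the matching cardinalities force $Y\cup Z$ to be all of them.

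The one step that needs genuine care — and the only place where the argument is not bookkeeping — is the passage through Lemma~\ref{lem_tok} in the second paragraph: correctly reading off which eigenspace is $V_-$ and which is $V_+$ from the sign $(-1)^d$, and using this to deduce that $Y\cap Z\neq Y$ forces $d$ even and places us in case~(c). Everything after that is supplied by Proposition~\ref{lem_q0inhyp} (whose own hard core is the estimate in Lemma~\ref{lem_bound_gens}) together with the cardinality computation, so I expect no further obstacle here.
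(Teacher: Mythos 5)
Your proposal is correct, and its backbone is the same as the paper's: everything reduces to Proposition \ref{lem_q0inhyp}, which confines $Y$ and $Z$ to a hyperplane $h$ with $h\cap Q(2d,q)\cong Q^+(2d-1,q)$, and the symplectic case is transported along the isomorphism $W(2d-1,q)\cong Q(2d,q)$ for $q$ even exactly as in the paper. Where you diverge is the endgame. The paper observes that $(Y,Z)$ is then a cross-intersecting EKR set of maximum size of the hyperbolic quadric $h\cap Q(2d,q)$ and invokes the classification of Theorem \ref{thm_ci_ekr_qp}, which identifies $Y$ and $Z$ individually as the latins and the greeks; you instead finish by pure counting, combining $|Y|=|Z|=\prod_{i=1}^{d-1}(q^i+1)$ from Lemma \ref{lem_tok} with $Y\cap Z=\emptyset$ from Proposition \ref{prop_q0_inmbs} to get $|Y\cup Z|=2\prod_{i=1}^{d-1}(q^i+1)=\prod_{i=0}^{d-1}(q^i+1)$, the full generator count of $Q^+(2d-1,q)$. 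Both are valid; the paper's route yields the marginally stronger conclusion that $\{Y,Z\}=\{X_1,X_2\}$, while yours proves exactly the stated claim about the union and avoids re-entering the eigenspace decomposition of the hyperbolic quadric. Your explicit preliminary step that $Y\cap Z\neq Y$ forces $d$ even (via the sign $(-1)^d$ of the $W_d$-eigenvalue and case (b) of Lemma \ref{lem_tok}) is handled in the paper by the corollary following Theorem \ref{thm_es} rather than inside this proof, but the argument is the same and is correctly carried out.
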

\begin{proof}
  First consider $Q(2d, q)$. 
  By Proposition \ref{lem_q0inhyp}, $Y, Z \subseteq h$ for some hyperplane $h$ isomorphic to $Q^+(2d-1, q)$ if not $Y=Z$.
  Hence, $(Y, Z)$ is a cross-intersecting set of $Q^+(2d-1, q)$ of maximum size.
  These sets were classified in Theorem \ref{thm_ci_ekr_qp}.
  
  The part of the assertion for $W(2d-1, q)$, $q$ even, follows, since then $Q(2d, q)$ and $W(2d-1, q)$ are isomorphic for $q$ even.
\end{proof}

\subsubsection{The Symplectic Polar Space $W(2d-1, q)$, $d$ even, $q$ odd}

Similar to \cite{MR2755082} we use the following property of $W(2d-1, q)$, $d$ even (\cite[Theorem 34]{MR2755082}):

\begin{satz}\label{thm_wq_odd_3lines}
  Let $\ell_1, \ell_2, \ell_3$ be three pairwise disjoint lines of $W(3, q)$, $q$ odd. 
  Then the number of lines meeting $\ell_1, \ell_2, \ell_3$ is $0$ or $2$.
\end{satz}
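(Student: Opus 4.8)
The plan is to reduce the statement to counting the projective zeros of a binary quadratic form over $\setF_q$ and to exploit that $q$ is odd. Since the symplectic group acts transitively on ordered pairs of disjoint totally isotropic lines (by Witt's extension theorem), I would first fix coordinates on $\PG(3, q)$ so that the alternating form is $B(x, y) = x_0 y_1 - x_1 y_0 + x_2 y_3 - x_3 y_2$ and so that $\ell_1 = \langle e_0, e_2 \rangle$ and $\ell_2 = \langle e_1, e_3 \rangle$; both are totally isotropic and disjoint, and together they span the space.

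Next I would parametrise the third line. Any line disjoint from $\ell_1$ projects isomorphically onto $\ell_2$, so I can write $\ell_3 = \langle e_1 + a e_0 + b e_2,\, e_3 + c e_0 + d e_2 \rangle$. A short computation with $B$ shows that $\ell_3$ is totally isotropic exactly when $b = c$, while $\ell_3$ is disjoint from $\ell_2$ exactly when $ad - bc \neq 0$; together these give the normal form $\ell_3 = \langle e_1 + a e_0 + b e_2,\, e_3 + b e_0 + d e_2 \rangle$ with $ad - b^2 \neq 0$.

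Then I would describe the common transversals. A transversal to $\ell_1$ and $\ell_2$ has the shape $m = \langle \alpha e_0 + \beta e_2,\, \gamma e_1 + \delta e_3 \rangle$, and demanding that $m$ also meet $\ell_3$ imposes a single linear relation between $[\alpha : \beta]$ and $[\gamma : \delta]$ (vanishing of the $4 \times 4$ determinant of the four spanning vectors); solving it yields $\gamma = b\beta - d\alpha$ and $\delta = b\alpha - a\beta$. Because the associated $2 \times 2$ matrix has determinant $ad - b^2 \neq 0$, this is a bijection from the points $[\alpha : \beta]$ of $\ell_1$ onto the $q+1$ lines of the transversal regulus, as expected. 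Finally, $m$ is totally isotropic precisely when $B(\alpha e_0 + \beta e_2, \gamma e_1 + \delta e_3) = \alpha\gamma + \beta\delta = 0$; substituting the expressions for $\gamma$ and $\delta$ turns this into the single binary quadratic equation
\begin{align*}
  d\alpha^2 - 2 b \alpha\beta + a\beta^2 = 0 .
\end{align*}

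The conclusion then follows from the arithmetic of $\setF_q$: the number of totally isotropic transversals equals the number of projective roots of this quadratic, which is nonzero (since $ad - b^2 \neq 0$ forces $a, b, d$ not all zero) with discriminant $-4(ad - b^2)$. The step where $q$ odd is essential — and the main thing to get right — is exactly here: for $q$ odd the factor $4$ is a nonzero square, so the discriminant is a \emph{nonzero} element, whence the form is either irreducible over $\setF_q$ (no roots, giving $0$ transversals) or split into two distinct linear factors (giving $2$ transversals), and the degenerate single-root case cannot occur. I would close by remarking that in characteristic $2$ the cross term $-2 b \alpha\beta$ vanishes and the quadratic becomes a perfect square with a unique root, which is precisely why the dichotomy genuinely requires $q$ odd and the even case must instead be handled through the isomorphism $W(3,q) \cong Q(4,q)$.
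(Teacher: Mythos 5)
Your proof is correct, but note that there is nothing in the paper to compare it against: the paper does not prove this statement, it imports it verbatim from Pepe, Storme and Vanhove \cite[Theorem 34]{MR2755082}. What you have supplied is therefore a self-contained coordinate verification that the paper lacks. The steps all check out: Witt's theorem does give transitivity on ordered pairs of complementary Lagrangians, so the normalisation $\ell_1=\langle e_0,e_2\rangle$, $\ell_2=\langle e_1,e_3\rangle$ is legitimate; the isotropy of $\ell_3$ is indeed $b=c$ and its disjointness from $\ell_2$ is $ad-bc\neq 0$; the $4\times 4$ determinant condition reduces to $\gamma(\alpha b-a\beta)+\delta(\alpha d-b\beta)=0$, whose solution $(\gamma,\delta)=(b\beta-d\alpha,\; b\alpha-a\beta)$ comes from an invertible matrix of determinant $ad-b^2$, so the $q+1$ transversals are correctly parametrised by the points of $\ell_1$; and substituting into $\alpha\gamma+\beta\delta=0$ gives $d\alpha^2-2b\alpha\beta+a\beta^2=0$ with discriminant $-4(ad-b^2)\neq 0$, whence $0$ or $2$ projective roots in odd characteristic. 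Two very minor remarks. First, what matters about the factor $4$ is only that it is nonzero when $q$ is odd (so the discriminant is nonzero and the form has no repeated root); whether $4$ is a square is irrelevant. Second, your closing aside about $q$ even overlooks the case $a=d=0$, $b\neq 0$ (allowed by $ad-b^2\neq 0$), where in characteristic $2$ the form $d\alpha^2+a\beta^2$ vanishes identically and all $q+1$ lines of the opposite regulus are totally isotropic; this does not affect the theorem, which assumes $q$ odd, but the even-characteristic dichotomy is ``$1$ or $q+1$'' rather than always $1$.
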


\begin{satz}
  Let $(Y, Z)$ be a cross-intersecting EKR set of maximum size of $W(2d-1, q)$, $d$ even, $q$ odd. Then $Y=Z$.
\end{satz}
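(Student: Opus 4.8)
The plan is to assume $Y \neq Z$ (so that $Y \cap Z \neq Y$) and derive a contradiction, which by the earlier corollary forces $Y = Z$. Under this assumption Proposition \ref{prop_q0_inmbs} gives the full intersection data: $Y \cap Z = \emptyset$, two generators of $Y$ meet only in even codimension, a generator of $Y$ and one of $Z$ meet only in odd codimension, and the relevant counts hold. The key idea is to pass to a rank-$2$ quotient: for a totally isotropic subspace $\mu$ of dimension $d-2$ one has $\mu^{\perp}/\mu \cong W(3,q)$, and generators through $\mu$ project to totally isotropic lines of this $W(3,q)$. I would then manufacture a local configuration forbidden by Theorem \ref{thm_wq_odd_3lines}.

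Concretely, fix $G \in Y$. By Proposition \ref{prop_q0_inmbs}(c) with $s = d-2$, the generator $G$ meets exactly $\gauss{d}{2} q$ members of $Y$ in dimension $d-2$, and each such member $A$ determines the subspace $\mu = G \cap A$ of dimension $d-2$. Averaging over the $\gauss{d}{2}$ subspaces of dimension $d-2$ inside $G$, some $\mu \subseteq G$ lies in at least $q$ further generators of $Y$; since any two distinct generators of $Y$ through $\mu$ meet in even codimension at most $2$ and contain $\mu$, they meet \emph{exactly} in $\mu$. Thus I obtain at least $q+1 \ge 3$ generators of $Y$ through $\mu$, pairwise intersecting in $\mu$, whose images in $W(3,q)$ are pairwise disjoint totally isotropic lines. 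For the transversals I use the $q+1$ hyperplanes of $G$ containing $\mu$: by Lemma \ref{lem_nmb_dm1z} each hyperplane of $G$ lies in a unique generator of $Z$, so these yield $q+1$ distinct generators of $Z$ through $\mu$. By the odd-codimension parity, every such $Z$-generator meets every $Y$-generator through $\mu$ in codimension $1$, hence its image meets each of the chosen lines in a point, giving $q+1 \ge 3$ common transversals. This contradicts Theorem \ref{thm_wq_odd_3lines}, which allows only $0$ or $2$.

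The step I expect to be the main obstacle is exactly the production of this local configuration: guaranteeing three \emph{pairwise disjoint} lines of $Y$ through a common $(d-2)$-space together with at least three common transversals. Here the parity statements of Proposition \ref{prop_q0_inmbs} and the bijection of Lemma \ref{lem_nmb_dm1z} must be combined carefully, and one must check that the induced lines are genuinely pairwise disjoint and that the $Z$-generators through $\mu$ are genuine transversals. It is worth emphasizing why this argument is special to the present case: the hypothesis $d$ even supplies the parities, while $q$ odd is essential because Theorem \ref{thm_wq_odd_3lines} fails for $q$ even; moreover the codimension-$2$ quotient of the symplectic space is $W(3,q)$ rather than the grid $Q^{+}(3,q)$ that appears in the quadric case, and it is precisely in $W(3,q)$, $q$ odd, that three pairwise disjoint lines cannot have many transversals. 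This is what rules out the analogue of the $Q^{+}(2d-1,q)$-split occurring for $Q(2d,q)$.
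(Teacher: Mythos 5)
Your proposal is correct and takes essentially the same route as the paper: assume $Y\neq Z$, use Proposition \ref{prop_q0_inmbs} to pigeonhole a $(d-2)$-space $\mu\subseteq G$ lying on at least three generators of $Y$ that pairwise meet exactly in $\mu$, pass to the quotient $W(3,q)$, produce at least three common transversals from $Z$, and contradict Theorem \ref{thm_wq_odd_3lines}. The only (minor) deviation is in how the transversals are manufactured --- the paper projects the elements of $Z$ meeting a generator $H\in Y$ disjoint from $G$ in a hyperplane (which hit $G$ in $\gaussm{d}$ distinct points), whereas you take the elements of $Z$ through the $q+1$ hyperplanes of $G$ containing $\mu$ via Lemma \ref{lem_nmb_dm1z} --- and both choices work (yours arguably projects more cleanly, since your transversals all contain $\mu$).
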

\begin{proof}
  Suppose to the contrary that $Y \cap Z \neq Y$.
  By Proposition \ref{prop_q0_inmbs}, we can find two disjoint generators $G$ and $H$ in $Y$.
  Again by Proposition \ref{prop_q0_inmbs}, there are exactly $q \gaussm{d} \gaussm{d-1}/(q+1)$ generators $Y' \subseteq Y$ which meet $G$ in a subspace of dimension $d-2$.
  The generator $G$ has $\gaussm{d} \gaussm{d-1}/(q+1)$ subspaces of dimension $d-2$.
  Hence, we find a subspace $\ell \subseteq G$ of dimension $d-2$ such that $\ell$ is contained in $q$ elements of $Y'$.
  Since $q$ is odd, there are at least three elements $y_1, y_2, y_3$ of $Y$ through $\ell$.
  
  Consider the quotient geometry $W_3$ of $\ell$ isomorphic to $W(3, q)$ and the projection of the elements of $Y$ and $Z$ onto $W_3$ from $\ell$.
  Since elements of $Y$ do not meet each other in dimension $d-1$ by Proposition \ref{prop_q0_inmbs}, $y_1, y_2, y_3$ are three disjoint lines in $W_3$ after projection.
  Lemma \ref{lem_nmb_dm1z} says that the $\gaussm{d}$ generators $Z' \subseteq Z$ which meet $H$ in dimension $d-1$ also meet $G$ in $\gaussm{d}$ pairwise different points.
  Hence there are at least $3$ generators in $Z'$ which are projected onto three different lines on $W_3$.
  These three lines have to meet the projections of $y_1$, $y_2$, and $y_3$, since $(Y, Z)$ is an cross-intersecting EKR set.
  By Theorem \ref{thm_wq_odd_3lines} this is not possible.
  Contradiction.
\end{proof}

\section{The Hermitian Polar Space $H(2d-1, q^2)$}\label{sec_hermitian}

It is well-known (see for example \cite{MR2312327}) that the linear programming 
bound given in \cite{fi_km_ekr_in_hermitians} can be reformulated as a weighted 
Hoffman bound. Hence, Lemma \ref{lem_efp} is applicable if $d > 1$. 
The original bound on EKR sets on $H(2d-1, q^2)$ is as follows.

\begin{satz}[\cite{fi_km_ekr_in_hermitians}]\label{lem_h2dp1_normal}
  Let $Y$ be a EKR set of $H(2d-1, q^2)$ with $d > 1$ odd. Then 
  \begin{align*}
    |Y| \le \frac{nq^{d-1} - f_1(q^{d-1}-1)\left(1 - c\right)}{q^{2d-1}+q^{d-1}+f_1(q^{d-1}-1)c}\approx q^{d^2-2d+2},
  \end{align*}
  where $n=\prod_{i=0}^{d-1} (q^{2i+1}+1)$, $f_1=q^2 \gaussm{d}_{q^2} \frac{q^{2d-3}+1}{q+1}$ and $c=\frac{q^2-q-1+q^{-2d+3}}{q^{2d}-1}$.
\end{satz}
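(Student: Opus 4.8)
The strategy is the Delsarte linear programming method, which—by the equivalence recalled above and established in \cite{MR2312327}—I would phrase as a single weighted Hoffman bound so that Lemma \ref{lem_efp} applies directly with $Z = Y$. An EKR set $Y$ is precisely a coclique of the disjointness graph (the graph of relation $R_d$, with adjacency matrix $A_d$) of the generator association scheme of $H(2d-1, q^2)$, which has type $\pstype = 1/2$ over the base field of order $q^2$. Thus I would look for an extended weight adjacency matrix of this graph of the form $M = \sum_{i=1}^{d} w_i A_i$: omitting the $A_0$-term forces the diagonal of $M$ to vanish, the requirement $w_i \le 0$ for $1 \le i \le d-1$ makes every non-edge entry non-positive, and $j$ is automatically an eigenvector since $M$ lies in the Bose--Mesner algebra. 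Writing $k$ for the eigenvalue of $j$ and $\lambda_-$ for the smallest eigenvalue of $M$, the coclique case of Lemma \ref{lem_efp} (that is, $Y = Z$, so that $\lambda_b = -\lambda_-$) yields $|Y| \le \frac{-\lambda_-}{k - \lambda_-}\, n$.

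Next I would compute the eigenvalues of $M$ on each eigenspace $W_m$. Using the closed form for the $P$-matrix of the scheme (as in Lemma \ref{lem_ev_q0} and \cite[Theorem 4.3.6]{vanhove_phd}) specialized to $\pstype = 1/2$ and base order $q^2$, one has $k = \sum_{i=1}^d w_i n_i$ and $\lambda_m = \sum_{i=1}^{d} w_i P_{im}$ for $m = 1, \ldots, d$, where $P_{im}$ is the eigenvalue of $A_i$ on $W_m$. The Hoffman ratio $\frac{-\lambda_-}{k - \lambda_-}\, n$ then becomes an explicit rational function of $q$, provided the weights are chosen so that the smallest eigenvalue is attained on the eigenspace(s) dictated by complementary slackness with the extremal inner distribution, so that $\lambda_-$ matches the eigenvalue governing the presumed extremal configuration.

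The decisive step is selecting the weights $w_1, \ldots, w_d$. By linear programming duality the optimal weight vector is pinned down by complementary slackness; equivalently, I would impose that $\lambda_m = \lambda_-$ for every index $m$ whose eigenspace meets the support of the extremal vector, which forces the bound to be tight and determines the $w_i$ up to scaling. In practice I would take exactly the dual-feasible weights exhibited in \cite{fi_km_ekr_in_hermitians}, substitute them into $k$ and $\lambda_-$, and simplify. The introduction of the auxiliary quantities $f_1 = q^2 \gaussm{d}_{q^2} \frac{q^{2d-3}+1}{q+1}$ and $c = \frac{q^2-q-1+q^{-2d+3}}{q^{2d}-1}$ is precisely the bookkeeping device that collapses these eigenvalue sums into the compact quotient claimed in the statement.

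I expect the main obstacle to be this final closed-form simplification rather than the conceptual set-up: after substituting the optimal weights, both numerator and denominator of the Hoffman ratio are alternating sums of Gaussian binomials $\gauss{d}{s}_{q^2}$ weighted by powers of $q$, and collapsing them to the stated form requires repeated, carefully ordered applications of the recurrence \eqref{eq_gaussian_rec}. A secondary subtlety specific to $d$ odd is that the resulting bound, of order $q^{d^2-2d+2}$, exceeds the size $\approx q^{(d-1)^2} = q^{d^2-2d+1}$ of the conjectured optimum (the pencil of generators through a fixed point) by a factor of about $q$; establishing that the linear program is genuinely optimal is exactly the point where the method stops short of a tight result, mirroring the open status of the EKR problem in $H(2d-1, q^2)$ for odd $d$.
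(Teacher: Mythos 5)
The first thing to be aware of is that the paper itself does not prove this statement: it is quoted verbatim from \cite{fi_km_ekr_in_hermitians}, and the closest thing to a proof in the paper is the proof of the cross-intersecting analogue, Theorem \ref{lem_h2dp1}, where the author writes down the explicit extended weight adjacency matrix $A = A_d - \alpha E_1 + \frac{\alpha f_1 c}{n} J + \alpha f_1 \frac{1-c}{n} I$. Your strategic frame --- a weighted Hoffman/ratio bound built from a matrix in the Bose--Mesner algebra --- is the right one, and your ansatz $M = \sum_{i=1}^{d} w_i A_i$ with $w_i \le 0$ for $i < d$ is the same object as the paper's $A$ written in the $A_i$-basis; the sign conditions on the $w_i$ are exactly the inequalities $Q_{s,1} \ge f_1 c = Q_{d-1,1}$ for $s < d$ and $Q_{d,1} < 0$ that the paper imports from \cite{fi_km_ekr_in_hermitians}.

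As a proof, however, your proposal has a genuine gap: every substantive step is deferred. You never produce the weights (saying you would "take exactly the dual-feasible weights exhibited in \cite{fi_km_ekr_in_hermitians}" is circular for a theorem that \emph{is} a citation of that paper), you do not verify the sign conditions, and you do not carry out the eigenvalue computation or the simplification to the stated quotient, which is precisely where $f_1 = Q_{0,1}$ and $c = Q_{d-1,1}/Q_{0,1}$ enter. Two specific assertions are also incorrect. First, complementary slackness does not "force the bound to be tight and determine the $w_i$": the bound has order $q^{d^2-2d+2}$ while the conjectured optimum has order $q^{(d-1)^2}$, as you yourself note at the end, so the weights cannot be pinned down by matching an extremal inner distribution; they come from (approximately) optimizing the linear program, not from a tightness argument. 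Second, the parenthetical "$Y = Z$, so that $\lambda_b = -\lambda_-$" does not follow: in Lemma \ref{lem_efp} one has $\lambda_b = \max\{-\lambda_-, \lambda_+\}$ regardless of whether $Y = Z$, so you must either check $\lambda_+ \le -\lambda_-$ for your chosen matrix (as the paper does in Claim 2 of the proof of Theorem \ref{lem_h2dp1}) or instead invoke the ordinary weighted ratio bound for cocliques, which only requires the smallest eigenvalue.
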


The result by Luz \cite{MR2312327} which shows
that the linear programming bound is a special case of the weighted Hoffman bound\footnote{This seems to be part of the mathematical folklore for a long time, but the author is not aware of any source older than \cite{MR2312327}.}
also holds for cross-intersecting EKR sets, but we feel that we should show this 
directly, since the transition from the linear programming technique used in 
\cite{fi_km_ekr_in_hermitians} to the weighted Hoffman bound is not obvious.
We shall prove a cross-intersecting result similar to \cite{fi_km_ekr_in_hermitians}
in the following.

\begin{satz}\label{lem_h2dp1}
  Let $(Y, Z)$ be a cross-intersecting EKR set of $H(2d-1, q^2)$ with $d > 1$. Then 
  \begin{align*}
    \sqrt{|Y|\cdot |Z|} \le \frac{n\lambda_b}{\lambda_b-k} \approx q^{d^2-2d+2},
  \end{align*}
  where $n=\prod_{i=0}^{d-1} (q^{2i+1}+1)$, $\lambda_b = -q^{(d-1)^2} - \alpha \left( 1 -  f_1 \frac{1 - c}{n}\right)$,
  $k = q^{d^2} + \alpha f_1 \left( c + \frac{1-c}{n} \right)$,
  $f_1=q^2 \gaussm{d}_{q^2} \frac{q^{2d-3}+1}{q+1}$, $c=\frac{q^2-q-1+q^{-2d+3}}{q^{2d}-1}$,
  and
\begin{align*}
  \alpha = \begin{cases}
            q^{d(d-1)} + q^{(d-1)^2} & \text{ if $d$ odd,}\\
            \frac{nq^{d^2-d} - n q^{(d-1)^2}}{n + (2c-2)f_1} & \text{ if $d$ even.}
           \end{cases}
\end{align*}
\end{satz}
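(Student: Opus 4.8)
The plan is to derive the inequality as a single application of Lemma \ref{lem_efp} to a carefully chosen extended weight adjacency matrix of the disjointness graph (the graph associated with $A_d$) on the generators of $H(2d-1,q^2)$. Following the philosophy of Luz \cite{MR2312327}, the linear programming bound behind Theorem \ref{lem_h2dp1_normal} is dual to a weighting of the relations $R_0,\ldots,R_d$ of the association scheme. I would extract those weights and assemble
\[
  A = A_d + \alpha M, \qquad M = \sum_{i=1}^{d-1} w_i A_i + w_d A_d,
\]
with $w_0 = 0$ and the $w_i$ for $1 \le i \le d-1$ nonpositive, and with $\alpha$ the scalar in the statement. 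The fact that both $k$ and $\lambda_b$ are affine in $\alpha$ mirrors this decomposition exactly: on $W_0$ the matrix $A$ has eigenvalue $k = q^{d^2} + \alpha f_1(c+\tfrac{1-c}{n})$ and on $W_1$ it has eigenvalue $\lambda_b = -q^{(d-1)^2} - \alpha(1 - f_1\tfrac{1-c}{n})$, the leading terms $q^{d^2}$ and $-q^{(d-1)^2}$ being the $r=0$ and $r=1$ eigenvalues of $A_d$ obtained from \eqref{eq_ev_disj} with field size $q^2$ and $\pstype = 1/2$.

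First I would check that $A$ is an extended weight adjacency matrix. Symmetry, vanishing diagonal (since $A_d$ and $M$ are supported off $R_0$), the property that $j$ is an eigenvector, and $A \neq 0$ are all immediate. The one substantive condition is $a_{xy} \le 0$ for non-adjacent $x,y$, i.e.\ for generators meeting in at least a point: as $A_d$ is supported on $R_d$ and $\alpha > 0$, this reduces to $w_i \le 0$ for $1 \le i \le d-1$, which is precisely the sign information carried by the dual feasible solution of the linear program in \cite{fi_km_ekr_in_hermitians}. Extracting this matrix from the linear programming formulation is the point the introduction flags as ``not obvious'', so this is where I would spend the most care.

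Next I would compute the complete eigenvalue list of $A$ from the formula in \cite[Theorem 4.3.6]{vanhove_phd}, simplified with \eqref{eq_gaussian_rec}, and verify that $-\lambda_b$ is the second largest absolute eigenvalue. The exponents $(d-r)^2 + r^2 - r$ governing the magnitude of the eigenvalues of $A_d$ are convex in $r$, with the two largest non-$j$ values occurring at $r = d$ (giving $q^{d^2-d}$) and $r = 1$ (giving $q^{(d-1)^2}$); the genuine competitor of $W_1$ is therefore $W_d$, and the role of the term $\alpha M$ is precisely to lift the $W_1$-eigenvalue to the same scale $q^{d^2-d}$. Here the two cases split according to the sign $(-1)^d$ of the $W_d$-eigenvalue of $A_d$. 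For $d$ odd this eigenvalue is negative, so $W_d$ and $W_1$ compete among the negative eigenvalues; the choice $\alpha = q^{d(d-1)}+q^{(d-1)^2}$ keeps the $W_1$-eigenvalue strictly the most negative, so $\lambda_- = \lambda_b$ and $\lambda_+ < -\lambda_b$. For $d$ even the $W_d$-eigenvalue is positive and of order $q^{d^2-d}$, hence it is $\lambda_+$; the rational value of $\alpha$ is exactly the solution of the balancing equation forcing the $W_d$-eigenvalue of $A$ to equal $-\lambda_b$, so that $\lambda_+ = -\lambda_- = -\lambda_b$. In both cases one must still confirm that the intermediate spaces $W_2,\ldots,W_{d-1}$ contribute eigenvalues of absolute value at most $-\lambda_b$; since their $A_d$-parts have magnitude at most $q^{d^2-4d+6} < q^{d^2-d}$ and the $\alpha M$-corrections are lower order, this should go through, but it is the most laborious verification and the main obstacle.

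Finally, Lemma \ref{lem_efp} with second largest absolute eigenvalue $-\lambda_b$ yields
\[
  \sqrt{|Y|\cdot|Z|} \le \frac{-\lambda_b}{k-\lambda_b}\, n = \frac{n\lambda_b}{\lambda_b - k},
\]
which is the asserted inequality after clearing signs. The asymptotic $\approx q^{d^2-2d+2}$ then follows by inserting the estimates $n \approx q^{d^2}$, $\alpha \approx q^{d^2-d}$, $f_1 \approx q^{4d-4}$, and $c \approx q^{2-2d}$, which give $\lambda_b \approx -q^{d^2-d}$ and $k \approx q^{d^2+d-2}$, so that the quotient collapses to $q^{2d^2-d}/q^{d^2+d-2} = q^{d^2-2d+2}$ at leading order.
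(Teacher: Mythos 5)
Your proposal follows essentially the same route as the paper: the paper's matrix $A = A_d - \alpha E_1 + \frac{\alpha f_1 c}{n}J + \alpha f_1\frac{1-c}{n}I$ is exactly your $A_d + \alpha M$ rewritten in the idempotent basis (the off-diagonal weights $w_i = (f_1 c - Q_{i,1})/n$ are nonpositive for $1 \le i \le d-1$ by the inequalities imported from \cite{fi_km_ekr_in_hermitians}), and the eigenvalue bookkeeping, the odd/even split in the choice of $\alpha$, and the final application of Lemma \ref{lem_efp} all match. One small correction: the maximum of $(d-r)^2 + r(r-1)$ over $2 \le r \le d-1$ is $d^2-3d+3$ (attained at $r=d-1$ when $d\ge 3$), not $d^2-4d+6$, but since this is still below $d^2-d$ your conclusion about the intermediate eigenspaces stands.
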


\begin{proof}
Let $d > 1$.
Let $A_d$ be the disjointness matrix as defined in Section \ref{sec_assoc}. Consider
the matrix $A$ defined as
\begin{align*}
  A = A_d - \alpha E_1 + \frac{\alpha f_1 c}{n} J + \alpha f_1 \frac{1 - c}{n} I.
\end{align*}

\paragraph*{Claim 1} Our first claim is that $A$ is a extended weight adjacency matrix. By Section \ref{sec_assoc},
it is clear that the entry $(x, y)$ of $E_1$ equals $Q_{i,1}/n$ if $x$ and $y$ meet
in codimension $i$. It was shown in \cite[Equations (6)--(11)]{fi_km_ekr_in_hermitians} 
that the following holds (note that the equation in \cite{fi_km_ekr_in_hermitians} do not depend on $d$ odd):
\begin{multicols}{2}
\begin{enumerate}[(a)]
 \item $Q_{0, 1} = f_1$, 
 \item $Q_{d-1, 1} = f_1 c$,
 \item $Q_{s, 1} \geq f_1 c$ if $s < d$,
 \item $Q_{d, 1} < 0$.
\end{enumerate}
\end{multicols}
Hence, the entry $(x, y)$ of the matrix $A$ is $0$ if $x = y$, it is less or equal to
zero if $1 \leq \codim(x \cap y) \leq d-1$, and it is larger than $1$ if $x$ and $y$ are disjoint.
This shows that $A$ is an extended weight adjacency matrix of the disjointness graph of generators.

\paragraph*{Claim 2} Our second claim is that one of the second absolute largest 
eigenvalues of $A$ is
\begin{align*}
  -q^{(d-1)^2} - \alpha \left( 1 -  f_1 \frac{1 - c}{n}\right),
  \intertext{ and that }
  k = q^{d^2} + \alpha f_1 \left( c + \frac{1-c}{n} \right).
\end{align*}
By \eqref{eq_ev_disj}, the eigenvalues of $A$ are 
\begin{align*}
  &q^{d^2} + \alpha f_1 \left( c + \frac{1-c}{n} \right) \text{ for } \langle j \rangle,\\
  &-q^{(d-1)^2} - \alpha \left( 1 -  f_1 \frac{1 - c}{n}\right) \text{ for } W_1,\\
  &(-1)^r q^{(d-r)^2 + r(r-1)} + \alpha f_1 \frac{1 - c}{n}\text{ for } W_r \text{ with } 1 < r < d,\\
  &(-1)^d q^{d(d-1)} + \alpha f_1 \frac{1 - c}{n} \text{ for } W_d.
\end{align*}
An simple calculation shows that 
$-q^{(d-1)^2} - \alpha \left( 1 -  f_1 \frac{1 - c}{n}\right) = (-1)^d (q^{d(d-1)} - \alpha f_1 \frac{1 - c}{n})$
is the second largest absolute eigenvalue. This proves our claim. 

Now we can apply Lemma \ref{lem_efp} with these values. Note that $k$ has approximately
size $q^{d^2+d-2}$, the second largest absolute eigenvalue $\lambda_b$ has approximately size
$q^{d(d-1)}$, and $n$ has approximately size $q^{d^2}$. Therefore,
\begin{align*}
  \frac{n\lambda_b}{\lambda_b-k}
\end{align*}
has approximately size $q^{d^2-2d+2}$. 
\end{proof}

Note that the normal adjacency matrix of the graph only
yields $q^{d^2-d}$ as an upper bound, so this improves the bound significantly.

For the sake of completeness we want to mention the cross-intersecting EKR sets for $d=2$ is 
We will do this after providing a general geometrical results on (maximal) cross-intersecting EKR sets, where we call an (cross-intersecting) EKR set $(Y, Z)$ \emph{maximal} if there exists no generator $x$ such that $(Y \cup \{ x \}, Z)$ or $(Y, Z \cup \{ x \})$ is an cross-intersecting EKR set.

\begin{lemma}\label{lem_zy_intersect}
  Let $(Y, Z)$ be a maximal cross-intersecting EKR set in a finite classical polar space of rank $d$. 
  If two distinct elements $y_1, y_2 \in Y$ meet in a subspace of dimension $d-1$, then all elements of $Z$ meet this subspace in at least a point.
\end{lemma}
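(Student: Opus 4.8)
The plan is to argue directly by contradiction, producing a totally isotropic subspace of dimension $d+1$, which is impossible in a polar space of rank $d$. Write $\pi := y_1 \cap y_2$, so $\dim \pi = d-1$. Suppose some $z \in Z$ fails to meet $\pi$ in at least a point, i.e. $z \cap \pi = \{0\}$. Because $(Y,Z)$ is cross-intersecting and $y_1, y_2 \in Y$, the generator $z$ meets both $y_1$ and $y_2$ nontrivially; fix points $P_0 \in z \cap y_1$ and $P_1 \in z \cap y_2$. The whole argument will turn on the fact that these two points, together with $\pi$, span too large a totally isotropic subspace.

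First I would record the genericity facts forced by $z \cap \pi = \{0\}$. Since $P_0 \in z$ and $P_0 \in y_1 \supseteq \pi$, a membership $P_0 \in \pi$ would give $P_0 \in z \cap \pi = \{0\}$; hence $P_0 \notin \pi$, and likewise $P_1 \notin \pi$. Moreover $P_0 \neq P_1$, since a common point would lie in $y_1 \cap y_2 = \pi$, again contradicting $z \cap \pi = \{0\}$. Consequently $\langle \pi, P_0 \rangle$ is a $d$-dimensional subspace of $y_1$ and therefore equals $y_1$; and $P_1 \notin y_1$ (otherwise $P_1 \in y_1 \cap y_2 = \pi$), so $\langle \pi, P_0, P_1 \rangle = \langle y_1, P_1\rangle$ has dimension exactly $d+1$.

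The core step is to check that $U := \langle \pi, P_0, P_1 \rangle$ is totally isotropic. This reduces to pairwise orthogonality of the generating pieces together with isotropy of each: $\pi$ is totally isotropic; $P_0 \perp \pi$ and $P_1 \perp \pi$ because $P_0 \in y_1$ and $P_1 \in y_2$, and $y_1, y_2$ are totally isotropic subspaces containing $\pi$; and $P_0 \perp P_1$ because both lie on the totally isotropic generator $z$. For the sesquilinear spaces this already yields that $U$ is totally isotropic. For the quadratic-form spaces one additionally expands $Q(p + aP_0 + bP_1)$ over $p \in \pi$ and uses $Q(u+w) = Q(u) + Q(w) + f(u,w)$ together with the vanishing of each $Q$-value and each cross term $f$, so that $U$ is totally singular as well. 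Thus $U$ is a totally isotropic subspace of dimension $d+1$ in a polar space of rank $d$, the desired contradiction; hence every $z \in Z$ meets $\pi$ in at least a point.

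The main obstacle here is only bookkeeping: making the orthogonality computation uniform across the sesquilinear and quadratic cases, and confirming the dimension count $\dim\langle \pi, P_0, P_1\rangle = d+1$ rather than $d$. I would also flag that this argument uses \emph{only} the cross-intersecting property of $(Y,Z)$, not its maximality; maximality is granted by the hypothesis but is not consumed in this proof, which is worth noting since the subsequent geometric arguments lean on maximality more essentially.
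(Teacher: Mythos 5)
Your argument is correct and is essentially the paper's own proof (which simply asserts that $\langle P, Q, y_1 \cap y_2\rangle$ is a totally isotropic $(d+1)$-space), with the dimension count and the isotropy verification written out in full. Your observation that maximality of $(Y,Z)$ is never used is also accurate.
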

\begin{proof}
 Assume that there exists a generator $z$ which meets $y_1$ and $y_2$ in points $P, Q$ not in $y_1 \cap y_2$. 
   Then $\langle P, Q, y_1 \cap y_2 \rangle$ is a totally isotropic subspace of dimension $d+1$. Contradiction.
\end{proof}

Lemma \ref{lem_efp} yields
\begin{align*}
  \frac{(q+1) (q^3+1)}{q^2+1}
\end{align*}
as an upper bound for $H(3, q^2)$. This bound is not sharp as the following trivial results shows.

\begin{satz}\label{thm_h3}
  Let $(Y, Z)$ be a maximal cross-intersecting EKR set of $H(3, q^2)$ with $|Y| \geq |Z|$. Then one of the following cases occurs:
  \begin{enumerate}[(a)]
   \item The set $Y$ is the set of all lines of $H(3, q^2)$, and $Z = \emptyset$. Here $|Y| \cdot |Z| = 0$.\label{thm_h3_a}
   \item The set $Y$ is the set of all lines meeting a fixed line $\ell$ in at least a point, and $Z = \{ \ell \}$. Here $|Y| \cdot |Z| = (q^2+1)q + 1$.\label{thm_h3_b}
   \item The set $Y$ is the set of all lines on a fixed point $P$, and $Y = Z$. Here $|Y| \cdot |Z| = (q+1)^2$.\label{thm_h3_c}
   \item The set $Y$ is the set of lines meeting two disjoint lines $\ell_1, \ell_2$, and $Z = \{ \ell_1, \ell_2\}$. Here $|Y| \cdot |Z| = 2(q^2+1)$.\label{thm_h3_d}
   \item The set $Y$ is the set of lines meeting three disjoint lines $\ell_1, \ell_2, \ell_3$, and $Z$ is the set of all $q+1$ lines meeting the lines of $Y$. Here $|Y| \cdot |Z| = (q+1)^2$.\label{thm_h3_e}
  \end{enumerate}
\end{satz}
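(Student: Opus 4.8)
The plan is to turn maximality into a closure condition and then classify the resulting closed pairs by the combinatorial type of $Z$. Write $S^\perp$ for the set of generators (lines) meeting every line of $S$. Maximality of $(Y,Z)$ is equivalent to $Y = Z^\perp$ and $Z = Y^\perp$: indeed, $Y\subseteq Z^\perp$ is the cross-intersecting property, and a line $x\notin Z^\perp$ is exactly one that cannot be added to $Y$, so $Z^\perp\subseteq Y$ by maximality, and symmetrically for $Z$. Thus $Y=Y^{\perp\perp}$, $Z=Z^{\perp\perp}$, and the whole problem becomes listing the closed pairs. I would use throughout that $H(3,q^2)$, with its points and generators, is a generalised quadrangle of order $(q^2,q)$: any two lines meet in at most a point, each line carries $q^2+1$ points, and each point lies on $q+1$ lines.

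First I would dispose of the degenerate and the concurrent cases. If $Z=\emptyset$ then $Z^\perp$ is the set of all generators and $Y^\perp=\emptyset$, giving (a). If instead two lines of $Z$ meet in a point $P$, then Lemma~\ref{lem_zy_intersect} (whose proof, bounding the dimension of a totally isotropic subspace, is symmetric in $Y$ and $Z$) forces every line of $Y$ through $P$; as soon as $|Y|\ge 2$ two such lines meet in $P$, and the Lemma applied the other way forces every line of $Z$ through $P$ too. Maximality then makes both $Y$ and $Z$ the full pencil of $q+1$ generators on $P$, which is (c).

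The substantive regime is when the lines of $Z$ are pairwise disjoint; here Lemma~\ref{lem_zy_intersect} shows that once $|Z|\ge 2$ the lines of $Y$ are pairwise disjoint as well, since two concurrent lines of $Y$ would push all of $Z$ through a common point. I would then split on $|Z|$. For $|Z|=1$, say $Z=\{\ell\}$, the self-polarity $\ell^\perp=\ell$ of a generator shows that two generators meeting $\ell$ can only meet each other on $\ell$, so $Y=\{\ell\}^\perp$ is precisely the $1+q(q^2+1)$ generators meeting $\ell$ and $Y^\perp=\{\ell\}$, which is (b). For $|Z|=2$, say $Z=\{\ell_1,\ell_2\}$ disjoint, counting the unique totally isotropic transversal through each point $P$ of $\ell_1$ (namely $\langle P, P^\perp\cap\ell_2\rangle$) gives $|Y|=q^2+1$ pairwise disjoint transversals, and one must show $Y^\perp=\{\ell_1,\ell_2\}$, which is (d). For $|Z|\ge 3$, three pairwise disjoint lines of $Z$ form a triad whose $q+1$ common transversals are $Y$, with $Z=Y^\perp$ the opposite family, both of size $q+1$; this is (e).

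The hard part is the transversal count for triads: proving that three pairwise disjoint generators of $H(3,q^2)$ have exactly $q+1$ common transversal generators. This is the Hermitian analogue of Theorem~\ref{thm_wq_odd_3lines}, and is exactly the statement that the generalised quadrangle $\mathrm{GQ}(q^2,q)$, having $s=t^2$, has every triad centric with $t+1=q+1$ centres; alternatively it can be read off the Baer-subline structure of lines of $H(3,q^2)$. This single fact does double duty. In case (e) it closes the configuration: any triad inside $Y$ recovers a $(q+1)$-element $Z$, and the two $(q+1)$-reguli are mutually transversal. In case (d) it rules out a third line in the span, since a third generator meeting all $q^2+1$ transversals of $\{\ell_1,\ell_2\}$ would be non-concurrent with both (a concurrence would force the pairwise-disjoint $Y$ through one point), yet it would give the triad $\{\ell_1,\ell_2,\ell_3\}$ at least $q^2+1>q+1$ centres, a contradiction. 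Once this geometric input is secured, the remainder — confirming $Y^\perp=Z$ in each case, evaluating the products $|Y|\cdot|Z|$, and orienting the two sides so that $|Y|\ge|Z|$ — is routine counting.
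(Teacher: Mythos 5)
Your proposal is correct and follows essentially the same route as the paper: a case split on whether two lines of one side are concurrent (handled via Lemma~\ref{lem_zy_intersect}) followed by a split on $|Z|\in\{0,1,2,\ge 3\}$, with the key external input in both arguments being that three pairwise disjoint generators of $H(3,q^2)$ admit exactly $q+1$ common transversals. Your reformulation of maximality as the Galois closure $Y=Z^\perp$, $Z=Y^\perp$ is only a cosmetic repackaging of the same argument, though you are more explicit than the paper in flagging that the triad count is the one fact requiring independent justification.
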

\begin{proof}
  Assume that \eqref{thm_h3_a} does not occur. 
  
  By Lemma \ref{lem_zy_intersect}, as soon as two elements of $Y$ meet in a point $P$, then all elements of $Z$ contain $P$. 
  Hence, \eqref{thm_h3_b} occurs or at least $2$ elements of $Z$ meet in $P$. Hence, all elements of $Y$ contain $P$ by Lemma \ref{lem_zy_intersect}.
  This is case \eqref{thm_h3_c}.
  
  So assume that $Y$ and vice-versa $Z$ only consist of disjoint lines. 
  If there are two lines $\ell_1, \ell_2 \subseteq Z$, then there are $q^2+1$ (disjoint) lines $L$ meeting $\ell_1$ and $\ell_2$ in a point (hence $|Y| \leq q^2+1$). 
  If more than $q+1$ of these lines meet $\ell_1$ (hence $|Y| > q+1$), then $Z$ contains at most two lines, since in $H(3, q^2)$ exactly $q+1$ lines meet $3$ pairwise disjoint lines in a point.
  This yields \eqref{thm_h3_d}.
  If $|Z| \geq 3$, then $|Y| \leq q+1$ by the previous argument.
  We may assume $|Y| \geq 3$.
  Then it is well-known that there are exactly $q+1$ lines meeting the $q+1$ lines of $Y$. 
  Hence, we can add these lines and then $Z$ is maximal.
  This yields \eqref{thm_h3_e}.
\end{proof}

The author tried to prove that the maximum cross-intersecting EKR set of $H(5, q^2)$ is the unique EKR of maximum size given in \cite{MR2755082}, but aborted this attempt after he got lost in too many case distinctions.
This EKR set of all generators meeting a fixed generator in at least a line is the largest cross-intersecting EKR set known to the author and has size $q^5+q^3+q+1$.
The largest example known to the author for $H(7, q^2)$ is the following.

\begin{Example}\label{ex_largst_h7}
  Let $G$ be a generator of $H(7, q^2)$.
  Let $Y$ be the set of all generators that meet $G$ in at least a $2$-space.
  Let $Z$ be the set of all generators that meet $G$ in at least a $3$-space.
  Then $(Y, Z)$ is a cross-intersecting EKR set.
\end{Example}
\begin{proof}
 A generator of $H(7, q^2)$ is a $4$-space. 
 A plane and a line of a $4$-space meet pairwise in at least a point.
 Hence, $(Y, Z)$ is a cross-intersecting EKR set.
\end{proof}

In this example $Y$ has
\begin{align*}
  1 + q + q^3 + q^4 + q^5 + q^6 + q^7 + 2q^8 + q^{10} + q^{12}
\intertext{elements, $Z$ has}
  1 + q + q^3 + q^5 + q^7
\end{align*}
elements, so in total the cross-intersecting EKR set has size
\begin{align*}
  \sqrt{|Y|\cdot |Z|} \approx q^{19/2}.
\end{align*}
The bound given in Theorem \ref{lem_h2dp1} for this case is approximately $q^{10}$.
For $H(2d-1, q^2)$, $d > 4$, the largest example known to the author is the EKR set 
of all generators on a fixed point. The author assumes that the largest known examples
are also the largest examples.

\section{Summary}

We summarize our results in the following table. 
We only list the cases, where cross-intersecting EKR sets of maximum size are not necessarily EKR sets.
The table includes the size of the largest known example if it is not known if 
the best known bound does not seem to be tight.

\begin{footnotesize}
\begin{center}
\begin{tabular}{p{2.5cm}lp{3cm}l}\toprule
Polar Space & Maximum Size $\sqrt{|Y|\cdot |Z|}$ & Largest (known) Examples & Reference \\\toprule
$Q^+(2d-1, q)$, $d$ odd & $n/2$ & $Y$ latins, $Z$ greeks & Th. \ref{thm_ci_ekr_qp}\\
$Q(2d, q)$, $d$ odd & $(q+1) \cdot \ldots \cdot (q^{d-1}+1)$ & $Y$ latins and $Z$ greeks of a $Q^+(2d+1, q)$, or $Y=Z$ EKR set & Th. \ref{satz_ci_ekr_q0} \\
$W(2d-1, q)$, $d$ odd, $q$ even & $(q+1) \cdot \ldots \cdot (q^{d-1}+1)$ & see $Q(2d, q)$ & Th. \ref{satz_ci_ekr_q0} \\
$H(3, q^2)$ & $q^3+q+1$ & Th. \ref{thm_h3} & Th. \ref{thm_h3} \\
$H(5, q^2)$ & $\lessapprox q^{5}$ & largest EKR set, size $\approx q^{5}$ & Th. \ref{lem_h2dp1} \\
$H(7, q^2)$ & $\lessapprox q^{10}$ & Example \ref{ex_largst_h7}, size $\approx q^{19/2}$ & Th. \ref{lem_h2dp1} \\
$H(2d-1, q^2)$, $d > 1$ & $\lessapprox q^{(d-1)^2+1}$ & all generators on a point, size $\approx q^{(d-1)^2}$ & Th. \ref{lem_h2dp1}\\
\end{tabular}
\end{center}
\end{footnotesize}

\bibliographystyle{plain}
\bibliography{literatur}

\begin{thebibliography}{10}

\bibitem{ahlswede_ekr_1997}
Rudolf Ahlswede and Levon~H. Khachatrian.
\newblock The complete intersection theorem for systems of finite sets.
\newblock {\em European J. Combin.}, 18(2):125--136, 1997.

\bibitem{brouwer1989distance}
A.E. Brouwer, A.M. Cohen, and A.~Neumaier.
\newblock {\em Distance-regular graphs}.
\newblock Ergebnisse der Mathematik und ihrer Grenzgebiete. Springer, 1989.

\bibitem{maarten_ekr_planes}
M.~{De Boeck}.
\newblock The largest {E}rd{\H{o}}s-{K}o-{R}ado sets of planes in finite
  projective and finite classical polar spaces.
\newblock {\em Des. Codes Cryptogr.}, Accepted(Special issue ``Finite
  Geometries, in honor of F. De Clerck''), 2013.

\bibitem{maarten_ekr_hyperbolic}
M.~{De Boeck}.
\newblock The second largest {E}rd{\H{o}}s-{K}o-{R}ado sets of generators of
  the hyperbolic quadrics $\mathcal{Q}^{+}(4n+1,q)$.
\newblock {\em Adv. Geom.}, Submitted, 2014.

\bibitem{MR2784326}
David Ellis, Ehud Friedgut, and Haran Pilpel.
\newblock Intersecting families of permutations.
\newblock {\em J. Amer. Math. Soc.}, 24(3):649--682, 2011.

\bibitem{MR0140419}
P.~Erd{\H{o}}s, Chao Ko, and R.~Rado.
\newblock Intersection theorems for systems of finite sets.
\newblock {\em Quart. J. Math. Oxford Ser. (2)}, 12:313--320, 1961.

\bibitem{MR867648}
P.~Frankl and R.~M. Wilson.
\newblock The {E}rd{\H o}s-{K}o-{R}ado theorem for vector spaces.
\newblock {\em J. Combin. Theory Ser. A}, 43(2):228--236, 1986.

\bibitem{haermers_ci_ekr_sets_interlacing_bound}
Willem~H. Haemers.
\newblock Disconnected vertex sets and equidistant code pairs.
\newblock {\em Electron. J. Combin.}, 4(1):Research Paper 7, 10 pp.
  (electronic), 1997.

\bibitem{MR0444483}
A.~J.~W. Hilton.
\newblock An intersection theorem for a collection of families of subsets of a
  finite set.
\newblock {\em J. London Math. Soc. (2)}, 15(3):369--376, 1977.

\bibitem{hirschfeld1998projective}
J.W.P. Hirschfeld.
\newblock {\em Projective Geometries Over Finite Fields}.
\newblock Oxford Mathematical Monographs. Clarendon Press, 1998.

\bibitem{hirschfeld1991general}
J.W.P. Hirschfeld and J.A. Thas.
\newblock {\em General Galois geometries}.
\newblock Oxford mathematical monographs. Clarendon Press, 1991.

\bibitem{MR0382015}
W.~N. Hsieh.
\newblock Intersection theorems for systems of finite vector spaces.
\newblock {\em Discrete Math.}, 12:1--16, 1975.

\bibitem{fi_km_ekr_in_hermitians}
Ferdinand Ihringer and Klaus Metsch.
\newblock On the maximum size of {E}rdős-{K}o-{R}ado sets in {$H(2d+1,q^2)$}.
\newblock {\em Des. Codes Cryptogr.}, 2012.

\bibitem{MR2312327}
Carlos~J. Luz.
\newblock A characterization of {D}elsarte's linear programming bound as a
  ratio bound.
\newblock {\em Linear Algebra Appl.}, 423(1):99--108, 2007.

\bibitem{MR1008163}
Makoto Matsumoto and Norihide Tokushige.
\newblock The exact bound in the {E}rd{\H o}s-{K}o-{R}ado theorem for
  cross-intersecting families.
\newblock {\em J. Combin. Theory Ser. A}, 52(1):90--97, 1989.

\bibitem{MR2755082}
Valentina Pepe, Leo Storme, and Fr{\'e}d{\'e}ric Vanhove.
\newblock Theorems of {E}rd{\H o}s-{K}o-{R}ado type in polar spaces.
\newblock {\em J. Combin. Theory Ser. A}, 118(4):1291--1312, 2011.

\bibitem{tokushige_ev_method}
Norihide Tokushige.
\newblock The eigenvalue method for cross {$t$}-intersecting families.
\newblock {\em J. Algebraic Combin.}, 38(3):653--662, 2013.

\bibitem{vanhove_phd}
Fr{\'e}d{\'e}ric Vanhove.
\newblock {\em Incidence geometry from an algebraic graph theory point of
  view}.
\newblock PhD thesis, University Of Ghent, 2011.

\bibitem{wilson_ekr_1984}
Richard~M. Wilson.
\newblock The exact bound in the {E}rd{\H o}s-{K}o-{R}ado theorem.
\newblock {\em Combinatorica}, 4(2-3):247--257, 1984.

\end{thebibliography}

\appendix

% \closeoutputstream{todo_list}

\end{document}